\definecolor{Gray}{gray}{0.9} 
\theoremstyle{plain} 
\newtheorem{proposition}{Proposition}[section] 
\newtheorem{theorem}[proposition]{Theorem} 
\newtheorem{lemma}[proposition]{Lemma}
\newtheorem{question}[proposition]{Question} 
\theoremstyle{definition} 
\newtheorem{definition}[proposition]{Definition} 
\newtheorem{example}[proposition]{Example} 
\newtheorem{setting}[proposition]{Setting} 
\theoremstyle{remark} 
\newtheorem{remark}[proposition]{Remark}
\newcommand{\QQ}{{\mathbb{Q}}} 
\newcommand{\ZZ}{{\mathbb{Z}}} 
\newcommand{\CC}{{\mathbb{C}}} 
\newcommand{\PP}{{\mathbb{P}}} 
\newcommand{\RR}{{\mathbb{R}}}
\newcommand{\cO}{{\mathcal{O}}}
\providecommand{\vol}{{\rm vol}} 
\providecommand{\BDPP}{\mathop{\rm BDPP}}
\newcommand{\floor}[1]{\left\lfloor #1 \right\rfloor}
\newcommand{\abs}[1]{\left\lvert #1 \right\rvert}
\DeclareMathOperator{\Amp}{Amp}
\DeclareMathOperator{\Nef}{Nef}
\DeclareMathOperator{\Eff}{Eff}
\DeclareMathOperator{\Bir}{Bir}
\DeclareMathOperator{\BigC}{Big}
\DeclareMathOperator{\Aut}{Aut}
\DeclareMathOperator{\id}{id}
\DeclareMathOperator{\MovC}{\overline{Mov}}
\DeclareMathOperator{\EffC}{\overline{Eff}}
\DeclareMathOperator{\Mov}{Mov}
\DeclareMathOperator{\Pos}{Pos}
\DeclareMathOperator{\PosC}{\overline{Pos}}
\DeclareMathOperator{\Supp}{Supp}
\numberwithin{equation}{section}
\begin{document}

\title[On numerical dimensions of Calabi--Yau varieties]{On numerical dimensions of Calabi--Yau varieties}

\author{Chen Jiang}
\address{Shanghai Center for Mathematical Sciences, Fudan University, Jiangwan Campus, Shanghai, 200438, China}
\email{chenjiang@fudan.edu.cn}
\author[Long Wang]{Long Wang}
\address{Graduate School of Mathematical Sciences, The University of Tokyo, 3-8-1 Komaba, Meguro-Ku, Tokyo 153-8914, Japan}
\email{wangl11@ms.u-tokyo.ac.jp}

\keywords{Numerical Iitaka dimension, Calabi--Yau variety, Cone conjecture} 

\subjclass[2020]{14J32, 14E30, 14J42}

\begin{abstract} Let $X$ be a Calabi--Yau variety of Picard number two with infinite birational automorphism group. We show that the numerical dimension $\kappa^{\mathbb{R}}_{\sigma}$ of the extremal rays of the closed movable cone of $X$ is $\dim X/2$. More generally, we investigate the relation between the two numerical dimensions $\kappa^{\mathbb{R}}_{\sigma}$ and $\kappa^{\mathbb{R}}_{\mathrm{vol}}$ for Calabi--Yau varieties. We also compute $\kappa^{\mathbb{R}}_{\sigma}$ for non-big divisors in the closed movable cone of a projective hyperk\"{a}hler manifold. 
\end{abstract}

\maketitle

\section{Introduction} 

Given a normal projective variety $X$ and a Cartier divisor $D$ on $X$, the Iitaka dimension, which measures the asymptotic growth rate of $h^0(X,mD)$, plays a fundamental role in birational geometry. It is well-known that the Iitaka dimension is not invariant under numerical equivalence of divisors. In order to overcome this defect, there are several definitions of numerical Iitaka dimension. These numerical dimensions were expected to be equivalent. Unfortunately, this anticipation was broken very recently. By studying a specific Calabi--Yau threefold (which was previously studied by Oguiso \cite[Section 6]{Og14}), Lesieutre \cite{Le22} showed that different notions of numerical dimension for a pseudo-effective $\mathbb{R}$-divisor do not coincide. More strikingly, the numerical dimension $\kappa^{\RR}_\sigma$ of this $\mathbb{R}$-divisor (to be defined below) is even not an integer.

\begin{definition}[{\cite[Definition 1]{Le22}, \cite[V.2.5]{Na04}}]\label{def 1.1}
Let $X$ be a normal projective variety and let $D$ be an $\mathbb{R}$-Cartier $\mathbb{R}$-divisor on $X$.
The numerical dimension 
\[ \kappa^{\RR}_{\sigma}(X,D) = \kappa^{\RR}_{\sigma, \sup}(X,D) \] 
is the supremum of real numbers $\ell$ such that 
\begin{equation}\label{kss}
\limsup_{m\to \infty} \frac{h^0(X,\lfloor mD\rfloor + A)}{m^{\ell}} > 0
\end{equation} 
for some ample Cartier divisor $A$ on $X$.
(If no such $\ell$ exists, take $\kappa^{\RR}_\sigma(X,D) = -\infty$.) 
Replacing $\limsup$ by $\liminf$ in \eqref{kss}, we define $\kappa^{\RR}_{\sigma, \inf}(X,D)$. 
\end{definition}

Shortly afterwards, Hoff and Stenger \cite{HS21} generalised Lesieutre's computation to any smooth Calabi--Yau threefold of Picard number $2$ whose birational automorphism group is infinite. One new ingredient is a cone theorem for these varieties due to Lazi\'{c} and Peternell \cite{LP13}, which is predicted by the Morrison--Kawamata cone conjecture (\cite{Ka97}, see also \cite{LOP18} for a survey). 

In this paper, we generalise their results to arbitrary dimension and to the singular setting following a similar strategy. 

\begin{definition} A normal projective variety $X$ is called a \emph{Calabi--Yau variety} if $X$ has at worst $\QQ$-factorial terminal singularities, its canonical divisor $K_X$ is numerically trivial, and $h^1(X, \cO_X) = 0$. 
\end{definition}

\begin{theorem}\label{main1} Let $X$ be a Calabi--Yau variety of Picard number $2$ whose birational automorphism group $\Bir(X)$ is infinite. Let $D \not\equiv 0$ be an $\mathbb{R}$-divisor on the boundary of the closed movable cone $\MovC(X)$. Then 
\[ \kappa^{\RR}_{\sigma, \sup}(X, D) = \kappa^{\RR}_{\sigma, \inf}(X, D) = \kappa^{\RR}_{\vol, \sup}(X, D) = \kappa^{\RR}_{\vol, \inf}(X, D) = \frac{\dim X}{2}. 
\]
\end{theorem}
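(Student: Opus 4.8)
The strategy mirrors the Hoff–Stenger and Lesieutre approach but must be pushed to arbitrary dimension. The starting point is the cone theorem predicted by the Morrison–Kawamata cone conjecture: since $X$ is a Calabi–Yau variety of Picard number $2$ with $\Bir(X)$ infinite, $\MovC(X)$ is a two-dimensional cone on which $\Bir(X)$ acts with a rational polyhedral fundamental domain, and the two boundary rays are either both irrational or form a single orbit; in the relevant case the boundary rays $\RR_{\geq 0}D_1$ and $\RR_{\geq 0}D_2$ are irrational eigenrays of a hyperbolic element $g \in \Bir(X)$ (pseudo-automorphism) with eigenvalues $\lambda > 1 > \lambda^{-1}$. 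Because $g$ is a pseudo-automorphism it preserves $\MovC(X)$ and acts on $\NS(X)_\RR$ preserving the intersection form up to the induced action; the key numerical input is that the self-intersection quadratic form (or rather the relevant Hilbert-type polynomial) is preserved, and $g^* D \equiv \lambda D$ on one boundary ray.

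First I would reduce to understanding $\kappa^\RR_\sigma$ and $\kappa^\RR_{\vol}$ for a fixed boundary divisor $D$, using that any $D' \not\equiv 0$ on the boundary is proportional to one of $D_1, D_2$ and that both numerical dimensions are invariant under rescaling and under pullback by birational pseudo-automorphisms (this last point needs the terminal $\QQ$-factorial hypothesis so that pseudo-automorphisms are isomorphisms in codimension one and hence preserve $h^0$ of movable-ish divisors — I would invoke whatever comparison between the four quantities is established earlier in the paper, presumably giving $\kappa^\RR_{\sigma,\inf} \le \kappa^\RR_{\sigma,\sup} \le \kappa^\RR_{\vol,\sup}$ and a reverse-type inequality, so it suffices to bound the outer two). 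Then the core computation: pick an ample $A$ and study $h^0(X, \lfloor mD\rfloor + A)$. For the \textbf{lower bound} $\kappa^\RR_\sigma(X,D) \ge \dim X/2$, I would use that $D$ lies in $\MovC(X)$ so that a suitable multiple is movable, run a sequence of flops/the $g$-action to spread $D$ around the cone, and estimate $h^0$ from below by the volume-type growth; concretely one expects $\vol(D + \epsilon A)$ or an Euler-characteristic computation to behave like $m^{\dim X/2}$ because on the boundary ray $D^{\dim X} = 0$ but $D^{\dim X/2}$ pairs nontrivially against the other boundary class — this is where the number $\dim X/2$ comes from, via the Hodge-index-type signature of the intersection form restricted to the $g$-invariant subspace, exactly as in Oguiso's threefold where $D^3 = 0$ but $D^2 \neq 0$. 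For the \textbf{upper bound}, I would argue that $h^0(X, \lfloor mD \rfloor + A)$ cannot grow faster than $m^{\dim X/2}$: using the $g$-equivariance, $h^0$ of $\lfloor mD\rfloor + A$ is comparable to $h^0$ of classes obtained by applying powers of $g$, which stay near the boundary ray, and a class near the boundary has volume $O(m^{\dim X/2})$ by continuity of $\vol$ together with the fact that $\vol$ vanishes on the boundary ray to order matching the signature; combined with asymptotic Riemann–Roch / Fujita-type estimates bounding $h^0$ by the volume plus lower-order terms, this caps the exponent.

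The \textbf{main obstacle} is the upper bound in arbitrary dimension and in the singular setting. In dimension three one has the full machinery of the minimal model program, boundedness of flops, and Kawamata's/Nakayama's structure results making $\kappa^\RR_\sigma$ tractable; in higher dimension one must control $h^0(X, \lfloor mD \rfloor + A)$ without a clean Iitaka fibration, since $D$ is only pseudo-effective and its Zariski-type decomposition (in Nakayama's $\sigma$-sense) may be delicate on the boundary of $\MovC$. I expect the resolution to go through: (i) the cone theorem guaranteeing that the $\Bir$-orbit of $D$ accumulates only at the two boundary rays, so that after applying a bounded-distortion pseudo-automorphism one may assume $mD$ is within bounded distance of the ray; (ii) a precise asymptotic $h^0(X, \lfloor mD\rfloor + A) \sim c\, m^{\dim X/2}$ obtained from an intersection-theoretic computation on the two-dimensional face, where the key algebraic fact is that the characteristic polynomial of $g^*$ acting on $\NS(X)_\RR$ forces the eigenvalue $\lambda$ to satisfy a reciprocity $\lambda \cdot \lambda^{-1} = 1$ and the invariant bilinear pairing to have the right signature; and (iii) handling $\QQ$-factorial terminal singularities by noting that $h^1(\O_X) = 0$ and $K_X \equiv 0$ give the vanishing and Riemann–Roch inputs one needs, and that pseudo-automorphisms of such $X$ act on $\NS$ through a group commensurable with one acting on a rational polyhedral cone. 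The equality of all four numerical dimensions then follows by squeezing: the common lower bound $\dim X/2 \le \kappa^\RR_{\sigma,\inf}$ and the common upper bound $\kappa^\RR_{\vol,\sup} \le \dim X/2$, together with the general inequalities among them, collapse the chain.
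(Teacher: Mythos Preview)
Your overall architecture matches the paper's: use the cone conjecture for $\rho=2$ Calabi--Yau varieties (Lazi\'c--Peternell / Zhang) to get a hyperbolic $g\in\Bir(X)$ acting by $\mathrm{diag}(\lambda,\lambda^{-1})$ on $N^1(X)_\RR$, then exploit the $g$-invariance to estimate volumes, and finally squeeze the four quantities together. So the strategy is correct and essentially the same.

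However, you have the difficulty backwards. The \emph{upper} bound $\kappa^\RR_{\vol,\sup}(X,D)\le n/2$ (and hence $\kappa^\RR_{\sigma,\sup}\le n/2$, since $\kappa^\RR_{\sigma,\diamond}\le\kappa^\RR_{\vol,\diamond}$ in general) is the easy direction: the paper introduces the $g^*$-invariant quadratic form $L_1(a_1R_1+a_2R_2)=a_1a_2$ and shows $\vol_X(D')\asymp L_1(D')^{n/2}$ for $[D']\in\Mov(X)^\circ$ by a compactness argument on a rational polyhedral fundamental domain $\Pi$ covered by finitely many pulled-back nef cones. Since $L_1(mR_i+A)$ grows linearly in $m$, this immediately gives $\vol_X(mR_i+A)\asymp m^{n/2}$, settling both $\kappa^\RR_{\vol,\sup}$ and $\kappa^\RR_{\vol,\inf}$. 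Your intersection-theoretic heuristic ``$D^{n/2}$ pairs nontrivially against the other boundary class'' is not how the argument runs and need not hold literally; the exponent $n/2$ arises simply because $L_1$ is quadratic and $\vol$ is degree $n$.

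The genuine obstacle is the \emph{lower} bound $\kappa^\RR_{\sigma,\inf}(X,D)\ge n/2$, i.e.\ controlling $h^0(X,\lfloor mD\rfloor+A)$ from below uniformly in $m$. Your suggestion of ``asymptotic Riemann--Roch / Fujita-type estimates'' does not suffice: the divisors $\lfloor mD\rfloor+A$ become nef only after a sequence of flops that varies with $m$, so one needs constants independent of the minimal model. The paper handles this by a separate general theorem (Theorem~\ref{com}): for Calabi--Yau varieties with \emph{finite birational index} (a condition guaranteed here by the finiteness of minimal models, itself a consequence of the cone conjecture), one has $\kappa^\RR_{\sigma,\diamond}=\kappa^\RR_{\vol,\diamond}$ for all $[D]\in\MovC(X)$. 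The proof of that theorem uses Koll\'ar's effective base-point-freeness and Birkar's effective nonvanishing to obtain a uniform estimate $h^0(X,kD)>C_1 k^n\vol_X(D)$ for all integral movable $D$ and all $k>C_0$, with $C_0,C_1$ depending only on $n$ and the birational index. This is the missing technical input in your plan.
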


See Section~\ref{num} for the definition of $\kappa^{\RR}_{\vol,\sup}$ and $\kappa^{\RR}_{\vol,\inf}$. In particular, in Theorem~\ref{main1}, when $\dim X$ is odd, $\kappa^{\RR}_\sigma(X, D) = \kappa^{\RR}_{\vol}(X, D)$ is not an integer. 
The following is a concrete example of a smooth Calabi--Yau variety of Picard number $2$ with infinite birational automorphism group, which was again studied by Oguiso \cite{Og18}.

\begin{example}[{\cite[Section 3]{Og18}}]\label{og} Let $N \geq 3$ be a positive integer. Let 
\[ X = F_1 \cap F_2 \cap \cdots \cap F_{N-1} \cap Q \subset \PP^N \times \PP^N 
\] 
be a general complete intersection of $N-1$ hypersurfaces $F_i$ $(1 \leq i \leq N-1)$ of bidegree $(1, 1)$ and a hypersurface $Q$ of bidegree $(2, 2)$ in $\PP^N \times \PP^N$. Then it is not hard to see that $X$ is a smooth Calabi--Yau manifold of dimension $N$ and of Picard number $2$. 

Consider the two projections $\pi_i: X \to \PP^N\, (i = 1, 2)$, which are both of degree two by the construction of $X$. Hence there are two birational involutions $\tau_i \in \Bir(X)$ corresponding to $\pi_i$. Let $f \in \Bir(X)$ be the element defined by $f = \tau_1 \circ \tau_2$. Then $f$ is of infinite order. In fact, by \cite[Lemma 3.2]{Og18}, the eigenvalues of $f^{\ast}|_{N^1(X)_{\RR}}$ are $(2N^2-1)\pm 2N\sqrt{N^2-1}$. 

When $N = 3$, $X$ is a smooth Calabi--Yau threefold (see \cite[Section 6]{Og14}). This is the example that Lesieutre studied in \cite{Le22}. 
\end{example}

In general, for a given divisor, some numerical dimensions are easier to compute than others. For example, usually the numerical dimension $\kappa^{\RR}_{\vol}$ is slightly easier to compute; see \cite[Lemma 9]{Le22} and \cite[Section 5]{Ya22} for related computations. So it would be useful to know whether different numerical dimensions coincide under certain conditions. For example, we have the following question on comparing $\kappa^{\RR}_{\sigma}$ and $\kappa^{\RR}_{\vol}$. 

\begin{question}[{\cite[Remark 8]{Le22}, \cite[Question 3.4]{CP21}}] Let $X$ be a normal projective variety and let $D$ be a pseudo-effective $\RR$-divisor on $X$. Do the equalities $\kappa^{\RR}_{\sigma, \sup}(X,D) = \kappa^{\RR}_{\vol, \sup}(X,D)$ and $\kappa^{\RR}_{\sigma, \inf}(X,D) = \kappa^{\RR}_{\vol, \inf}(X,D)$ hold? 
\end{question}

In Section~\ref{prf1.5}, we give an affirmative answer to this question for divisors in the closed movable cone of a Calabi--Yau variety with finite birational index.

\begin{definition}Let $X$ be a Calabi--Yau variety. We say that $X$ has {\it finite birational index} if
there exists a positive integer $N$ such that for any Weil divisor $D_Y$ on any minimal model $Y$ of $X$, $ND_Y$ is Cartier.
\end{definition}

\begin{theorem}\label{com} Let $X$ be a Calabi--Yau variety with finite birational index. Then for any $\RR$-divisor class $[D] \in \MovC(X)$, we have 
\[ \kappa^{\RR}_{\sigma, \sup}(X,D) = \kappa_{\vol, \sup}^\RR(X, D) \ \ \text{and} \ \ \kappa^{\RR}_{\sigma, \inf}(X,D) = \kappa_{\vol, \inf}^\RR(X, D). 
\] 
\end{theorem}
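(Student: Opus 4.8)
The overall strategy is to reduce the statement to a comparison of the two numerical dimensions on the (finitely many, up to the action of $\Bir(X)$) minimal models of $X$, where the movable cone is subdivided into the nef chambers. The plan is as follows. First I would recall that the values $\kappa^{\RR}_{\sigma}$ and $\kappa^{\RR}_{\vol}$ are invariant under pullback by birational contractions and, more importantly for us, under the pseudo-automorphisms coming from $\Bir(X)$, since for a Calabi--Yau variety every birational map between minimal models is an isomorphism in codimension one and hence preserves $h^0$ of (the round-down of) every divisor up to a bounded ample twist. Combined with the Kawamata--Morrison cone conjecture input that is implicit in the hypothesis (or, more concretely, with the fact that the movable cone decomposes into nef chambers of the various minimal models $Y_i$), this lets me assume $[D]$ lies in $\Nef(Y)$ for some minimal model $Y$ of $X$.

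Second, on a fixed minimal model $Y$ with $[D]$ nef, I would use the finite birational index hypothesis: there is an $N$ with $ND$ Cartier (a genuine Cartier divisor, not just $\RR$-Cartier), so after rescaling I may treat $D$ as an honest nef Cartier divisor and the round-down issues in Definition \ref{def 1.1} become harmless. For a nef Cartier divisor the numerical dimension $\kappa_\sigma$ in the sense of Nakayama agrees with the classical Kodaira--Iitaka-type numerical dimension $\nu(Y,D)$, i.e.\ the largest $k$ with $D^k \cdot H^{\dim Y - k} \neq 0$ for $H$ ample; and since $K_Y \equiv 0$ and $Y$ has terminal (hence klt, hence canonical up to the terminal hypothesis) singularities, base-point-free-type results and the abundance-flavoured statements available for nef divisors on Calabi--Yau varieties of this kind give good control. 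The key point is then Nakayama's theorem that for a nef $\RR$-divisor $\kappa_\sigma = \kappa_\nu$, together with the analogous statement for $\kappa_{\vol}$: one shows $\kappa^{\RR}_{\vol,\sup}(Y,D) = \kappa^{\RR}_{\vol,\inf}(Y,D) = \nu(Y,D)$ by estimating $\vol(Y, \lfloor mD\rfloor + A)$ from above and below using asymptotic Riemann--Roch (the upper bound is the content of the general inequality $\kappa_{\vol}\le\kappa_\sigma$, and the lower bound uses that $D$ nef forces $\vol(mD+A)\sim m^{\nu}\,(D^\nu\cdot A^{\dim Y-\nu})/\nu!$).

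Third, I would assemble the pieces: on each nef chamber the two numerical dimensions both equal $\nu$, hence coincide; the invariance established in the first step propagates this equality to all of $\MovC(X)$; and the $\sup$/$\inf$ versions agree because on a nef Cartier class the relevant limits actually exist. The main obstacle I anticipate is the first step — making the reduction to the nef chamber fully rigorous in the singular, $\RR$-divisor setting. One must be careful that (i) the birational maps used are pseudo-automorphisms (isomorphisms in codimension one) so that $h^0$ is genuinely preserved, which is where terminality and $K_X\equiv 0$ enter; (ii) the cone decomposition into rational polyhedral nef chambers, or at least a locally finite such decomposition, is available — this is exactly where the finite birational index hypothesis does its work, bounding the minimal models and their Cartier indices uniformly; and (iii) a class $[D]$ on a wall between chambers is handled by a limiting argument, using upper semicontinuity of $\kappa_\sigma$ and $\kappa_{\vol}$ along the cone together with the fact that both are constant on the interior of each chamber. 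Once the reduction is in place, the chamber-by-chamber comparison via $\nu(Y,D)$ is essentially Nakayama's theory plus asymptotic Riemann--Roch and should be routine.
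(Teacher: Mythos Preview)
Your proposal has a genuine gap at the heart of both reduction steps.

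First, the reduction ``assume $[D]$ lies in $\Nef(Y)$ for some minimal model $Y$'' does not cover the cases the theorem is designed for. The chamber decomposition $\bigcup_\phi \phi^*\Nef(Y)$ covers only $\Mov(X)^\circ$ (equivalently $\MovC(X)\cap\Eff(X)$, see \eqref{eq:ME=Mo}), not the full closed cone $\MovC(X)$. The central examples in the paper --- the irrational extremal rays of $\MovC(X)$ in the Picard-number-two situation of Theorem~\ref{main1} --- lie on the boundary, and for those classes $\kappa^\RR_\sigma(X,D)=\dim X/2$ is a non-integer when $\dim X$ is odd. That is impossible for a class that is nef on some minimal model, since for nef divisors all numerical dimensions equal the integer $\nu$. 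Your ``limiting argument for walls between chambers'' addresses interior walls, not these boundary rays, and semicontinuity gives at best an inequality, not the needed equality.

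Second, and more fatally, your use of the finite birational index is a misreading. That hypothesis says $N\cdot(\text{Weil divisor})$ is Cartier on every minimal model; it says nothing about an $\RR$-divisor $D$ with irrational coefficients, and no multiple of such a $D$ is ever Cartier. So ``there is an $N$ with $ND$ Cartier \ldots\ after rescaling I may treat $D$ as an honest nef Cartier divisor'' is simply false for the irrational classes that matter. (Incidentally, you also have the general inequality backwards: it is $\kappa^\RR_\sigma\le\kappa^\RR_{\vol}$ that holds for free, not the reverse.)

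The paper's proof avoids both pitfalls by never trying to make $D$ itself nef or integral. Instead it proves a \emph{uniform} estimate
\[
h^0\bigl(X,\lfloor kD\rfloor + 2kA\bigr) > C_1\, k^n\,\vol_X(D+A)
\]
valid for all $[D]\in\MovC(X)$ and all $k>C_0$, with $C_0,C_1$ depending only on $\dim X$ and the birational index $N$ (Proposition~\ref{prop_com}). Integrality enters through the \emph{Weil} divisor $\lfloor D\rfloor + A$: its pushforward to the appropriate minimal model is nef and big with Cartier index dividing $N$, and one then applies effective base-point-freeness (Koll\'ar) and effective non-vanishing (Birkar) to get $h^0(kD')\gtrsim k^n\vol(D')$ with universal constants (Lemmas~\ref{lem bpf L} and \ref{lem_h0vol}). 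Substituting $mD$ for $D$, fixing $k$, and letting $m\to\infty$ yields $\kappa^{\RR}_{\sigma,\diamond}(X,kD)\ge \kappa^{\RR}_{\vol,\diamond}(X,D)$; Lemma~\ref{lem_mul}(3) removes the $k$, and the reverse inequality is \cite[Proposition~3.1]{CP21}. Thus the finite birational index is used to make Koll\'ar's and Birkar's constants uniform across all minimal models, not to clear denominators of $D$ and not to control the chamber decomposition.
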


\begin{remark}
\begin{enumerate}
 
\item Here ``having finite birational index" is a natural assumption. In fact, by Lemma~\ref{lem finite N}, a Calabi--Yau variety $X$ has finite birational index if $X$ has only finitely many minimal models up to isomorphism, or if $X$ is a hyperk\"ahler manifold, or if $\dim X\leq 3$. Moreover, it is predicted by the Morrison--Kawamata cone conjecture (\cite{Ka97}) that every Calabi--Yau variety has only finitely many minimal models up to isomorphism, hence all Calabi--Yau varieties are expected to have finite birational index.

\item It is not hard to see that $\kappa^{\RR}_{\sigma, \diamond}(X,D) \leq \kappa_{\vol, \diamond}^\RR(X, D)$ for $\diamond \in \{\sup, \inf\}$ hold in general; see e.g., \cite[Proposition 3.1]{CP21}.

\end{enumerate}

\end{remark}



In Section~\ref{prf2}, we investigate the numerical dimensions of $\RR$-divisors on projective hyperk\"{a}hler manifolds. 

\begin{definition} A \textit{projective hyperk\"{a}hler manifold} is a simply connected smooth projective variety $X$ such that $H^0(X,\Omega^2_X)$ is spanned by an everywhere non-degenerate $2$-form $\sigma$.
\end{definition} 

\begin{theorem}\label{main2} Let $X$ be a projective hyperk\"{a}hler manifold and let $ D\not \equiv 0$ be a non-big $\mathbb{R}$-divisor in the closed movable cone $\MovC(X)$. Then 
\[ \kappa^{\RR}_{\sigma, \sup}(X, D) = \kappa^{\RR}_{\sigma, \inf}(X, D) = \kappa^{\RR}_{\vol, \sup}(X, D) = \kappa^{\RR}_{\vol, \inf}(X, D) = \frac{\dim X}{2}. 
\] 
\end{theorem}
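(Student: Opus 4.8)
\textbf{Proof proposal for Theorem~\ref{main2}.}

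The plan is to reduce the computation of both numerical dimensions to the well-understood theory of nef (equivalently, for hyperk\"ahler manifolds, movable) non-big divisors, using the Beauville--Bogomolov--Fujiki (BBF) quadratic form $q$ on $H^2(X,\RR)$. First I would recall that on a projective hyperk\"ahler manifold $X$ of dimension $2n$, a class $D$ in the closed movable cone $\MovC(X)$ which is nef satisfies the Fujiki relation: for every $k$, the self-intersection $D^{k}\cdot(\text{fixed classes})$ is controlled by $q(D)$, and in particular $D^{k}\equiv 0$ for $k>n$ precisely when $q(D)=0$. Since $D$ is assumed non-big and nonzero, and the positivity cone (the $q$-positive component) has closure meeting $\MovC(X)$ exactly in the big movable classes, non-bigness of $D$ forces $q(D)=0$; hence $D$ lies on the boundary ``null cone'' of $q$. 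The key numerical input is then that the numerical dimension $\nu(D)=\max\{k : D^{k}\not\equiv 0\}$ equals exactly $n = \dim X/2$ for such a class (this is the standard fact that an isotropic nef class on a hyperk\"ahler manifold has numerical dimension half the dimension, coming from the Fujiki relation $D^{2n}=c_X q(D)^{n}$ and the non-vanishing of $q(D)^{n-?}$-type intersections; more precisely $q(D)=0$ but $D$ sits on an $n$-dimensional isotropic subspace, forcing $D^{n+1}=0\neq D^{n}$ after intersecting with a K\"ahler class).

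Next I would reduce the general movable (not necessarily nef) case to the nef case. By the Kawamata--Morrison cone conjecture for hyperk\"ahler manifolds --- which is known, and in particular the statement that $\Bir(X)$ acts on $\MovC(X)$ with a rational polyhedral fundamental domain, together with the fact that birational maps between hyperk\"ahler manifolds are isomorphisms in codimension one and hence preserve $\kappa^{\RR}_\sigma$ and $\kappa^{\RR}_{\vol}$ (both are birational invariants in codimension one, by the definition via $h^0(\lfloor mD\rfloor + A)$ and pushforward/pullback of such divisors, and the fact that $q$ and all intersection numbers are preserved) --- one may move $D$ by a birational automorphism into the nef cone of some birational model $X'$, or into the closure of a chamber. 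Either way $\kappa^{\RR}_{\sigma,\diamond}(X,D)$ and $\kappa^{\RR}_{\vol,\diamond}(X,D)$ for $\diamond\in\{\sup,\inf\}$ are unchanged, so we reduce to $D$ nef on a hyperk\"ahler manifold $X'$, still non-big, still with $q(D)=0$.

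For the nef non-big case, the equalities $\kappa^{\RR}_{\sigma}=\kappa^{\RR}_{\vol}=\nu$ should follow from Nakayama's results together with the Fujiki relation: for a nef divisor $\nu(X,D)=\kappa^{\RR}_\sigma(X,D)$ by \cite{Na04}, and the volume-type numerical dimension $\kappa^{\RR}_{\vol}$ sandwiched between them by the general inequality quoted in the excerpt forces all four quantities to agree. Then $\nu(X',D) = \max\{k: D^{k}\not\equiv 0\}$, and the BBF form computation gives this is exactly $n=\dim X/2=\dim X'/2$: on an isotropic ray $q(D)=0$ implies $\int_{X'}D^{2n}=0$ and more, but one can always find $n$ ``complementary'' isotropic/positive classes so that $D^{n}\neq 0$ in cohomology; equivalently, the Fujiki relation $D^{a}\cdot\alpha^{2n-a}= \binom{2n}{a}^{?}\text{-type}\cdot q(D,\alpha)$-expressions vanish for $a\geq n+1$ but not for $a=n$ when $D\neq 0$. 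The main obstacle I anticipate is precisely pinning down this last cohomological non-vanishing $D^{n}\not\equiv 0$ for an arbitrary nonzero isotropic nef class --- one has to rule out $\nu(D)<n$, which is where the Hodge-theoretic structure of $H^{*}(X,\RR)$ (the $\mathfrak{sl}_2\times\mathfrak{sl}_2$ or Verbitsky algebra structure, or a direct Fujiki-relation argument splitting off a $q$-positive perturbation) genuinely enters; the polyhedrality/cone-conjecture reduction and the Nakayama sandwich are comparatively formal.
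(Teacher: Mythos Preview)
Your approach contains a genuine gap in the reduction step. You propose to move $D$ by an element of $\Bir(X)$ into the nef cone of some birational hyperk\"ahler model, invoking Markman's cone theorem. But Markman's theorem gives a rational polyhedral fundamental domain for the action of $\Bir(X)$ on the \emph{effective} movable cone $\MovC(X)\cap\Eff(X)$, not on all of $\MovC(X)$. A nonzero non-big class $D$ on $\partial\MovC(X)$ need not be effective, and need not lie in (or be movable by $\Bir(X)$ into) the closure of any single nef chamber: when the chamber decomposition is infinite the chambers may accumulate towards the isotropic boundary. The Picard-rank-two examples in Section~\ref{pic2} already illustrate this---the extremal rays of $\MovC(X)$ there are irrational and belong to no $\phi^{*}\Nef(X')$. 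So the step ``reduce to $D$ nef'' is not available in general, and without it you cannot invoke Nakayama's identification $\kappa_\sigma=\nu$ for nef divisors.

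The paper avoids this entirely. Instead of making $D$ nef, it notes that for any ample $A$ and any $m$ the class $mD+A$ lies in $\Mov(X)^{\circ}$, hence becomes nef and big on \emph{some} birational hyperk\"ahler model (depending on $m$), where its volume is computed by the Fujiki relation. Since $q_X(D)=0$ (non-bigness) and $q_X(D,A)\neq 0$ (Hodge index, Lemma~\ref{lem qD}(4)),
\[
\vol_X(mD+A)=c_X\,q_X(mD+A)^{d}=c_X\bigl(2m\,q_X(D,A)+q_X(A)\bigr)^{d},
\]
a polynomial in $m$ of degree exactly $d=\dim X/2$. This gives $\kappa^{\RR}_{\vol,\sup}=\kappa^{\RR}_{\vol,\inf}=d$ directly from the definition. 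The passage to $\kappa^{\RR}_{\sigma}$ then uses Theorem~\ref{com} (the paper's own comparison theorem for Calabi--Yau varieties with finite birational index), together with Lemma~\ref{lem finite N} saying hyperk\"ahler manifolds have finite birational index. In other words, the paper never computes $\nu(D)$ and never needs $D$ itself to sit in any nef cone; the Fujiki relation is applied to the \emph{perturbed} class $mD+A$, which is exactly what the definitions of $\kappa^{\RR}_{\vol}$ and $\kappa^{\RR}_{\sigma}$ already hand you.

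Incidentally, the step you flagged as your ``main obstacle''---showing $D^{n}\not\equiv 0$ for a nonzero isotropic nef $D$---is in fact routine from the polarized Fujiki relation (expand $c_X\,q_X(D+tA)^{d}$ in $t$ and compare with $\int_X(D+tA)^{2d}$; the coefficient of $t^{d}$ is a nonzero multiple of $q_X(D,A)^{d}\neq 0$). The actual obstacle is the one you treated as formal.
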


Note that the dimension of a hyperk\"{a}hler manifold is always even, and therefore, in Theorem~\ref{main2}, $\kappa^{\RR}_\sigma(X, D) = \kappa^{\RR}_{\vol}(X, D)$ is always an integer.

\subsection*{Acknowledgments} The authors thank Professor Keiji Oguiso and Doctor C\'{e}cile Gachet for suggestions, discussions and comments. We are very grateful to the referees for many useful comments and suggestions. 

The first author was supported by NSFC for Innovative Research Groups (Grant No.~12121001) and National Key Research and Development Program of China (Grant No.~2020YFA0713200). The second author was supported by JSPS KAKENHI Grant Number 21J10242.
The first author is a member of LMNS, Fudan University.

\section{Preliminaries}\label{prel}

Throughout this paper, we work over the field $\CC$ of complex numbers and refer to \cite{KM98} for knowledge about singularities and the minimal model program. 

\subsection{Notation and conventions}\label{notat} 

Let $X$ be a normal projective variety. Denote by $N^1(X)$ the group of Cartier divisors on $X$ modulo numerical equivalence. This is a free abelian group of finite rank, and its rank is called the {\it Picard number} of $X$ and denoted by $\rho(X)$. Denote $N^1(X)_{\mathbb{R}} = N^1(X) \otimes \mathbb{R}$. Denote by $\Aut(X)$ the automorphism group of $X$ and denote by $\Bir(X)$ the birational automorphism group of $X$. 

An effective divisor on $X$ is called \textit{movable} if its stable base locus has codimension at least $2$. The {\it nef cone} $\Nef(X)$, the {\it effective cone} $\Eff(X)$, and the {\it movable cone} $\Mov(X)$ are cones in $N^1(X)_{\RR}$ generated by numerical equivalence classes of nef divisors, effective divisors, and movable divisors respectively. 
The {\it ample cone} $\Amp(X)$ is the interior of $\Nef(X)$.
 The {\it pseudo-effective cone} $\EffC(X)$ is the closure of $\Eff(X)$ and the {\it big cone} $\BigC(X)$ is the interior of $\Eff(X)$. Denote the {\it closed movable cone} $\MovC(X)$ to be the closure of $\Mov(X)$ and by $\Mov(X)^{\circ}$ the interior of $\Mov(X)$. 


Suppose that $X$ has terminal singularities. 
A \textit{minimal model} of $X$ is a normal projective variety $X'$ with a birational map $\alpha: X \dashrightarrow X'$ such that $\alpha^{-1}$ has no exceptional divisors, $X'$ has only $\QQ$-factorial terminal singularities, and $K_{X'}$ is nef. 
Here we remark that this definition coincides with the standard one in \cite[Definition~3.50]{KM98}. In fact, (1--4) can be checked easily and ($5^m$) follows from the fact that for any $\alpha$-exceptional divisor $E$ on $X$, $a(E, X)=0<a(E, X')$ as $X'$ is terminal, where $a(E, X)$ is the discrepancy defined by \cite[Definition~2.22]{KM98}.

Furthermore,
if $X$ itself is a minimal model, e.g., when $X$ is a Calabi--Yau variety, then $\alpha$ is {\it small}, that is, $\alpha$ is an isomorphism in
codimension one by \cite[Corollary~3.54]{KM98}, and there is a natural linear isomorphism $\alpha_{\ast}=(\alpha^{-1})^*: N^1(X)_{\RR} \to N^1(X')_{\RR}$ which preserves movable cones and effective cones. 

\subsection{Numerical dimensions}\label{num} The numerical dimension $\kappa_{\sigma}$ has already been defined in the introduction. Here we recall other numerical dimensions. 

Let $X$ be a normal projective variety of dimension $n$. Recall that the volume of an $\RR$-Cartier $\mathbb{R}$-divisor $D$ on $X$ is defined by 
\[ \vol_X(D) = \limsup_{m \to +\infty} \frac{h^0(X, \lfloor mD\rfloor)}{m^{n}/n!}. 
\]
Note that if $D$ is a nef $\RR$-Cartier $\RR$-divisor, then $\vol_X(D)=(D^n)$. It is easy to see that volumes and global sections are preserved by small birational maps.

\begin{definition}[\cite{Le13}]\label{ndv}
Let $X$ be a normal projective variety and let $D$ be a pseudo-effective $\mathbb{R}$-Cartier $\mathbb{R}$-divisor on $X$. Fix an ample Cartier divisor $A$ on $X$. The numerical dimension 
\[ \kappa_{\vol}^\RR(X, D) = \kappa_{\vol, \inf}^\RR(X, D)
\]
is the supremum of real numbers $\ell$ for which there exists a constant $C > 0$ satisfying
\[ \vol_X(D + tA) > Ct^{\dim X - \ell} \ \text{ for all } t > 0; 
\] 
or equivalently, 
\begin{align}\label{nvi}
\liminf_{m \to +\infty} \frac{\vol_X(mD+A)}{m^{\ell}} > 0
\end{align}
by \cite[Lemma 3.2]{CP21}. 
Replacing $\liminf$ by $\limsup$ in \eqref{nvi}, we define $\kappa_{\vol, \sup}^\RR(X, D)$. 
\end{definition}

As mentioned in \cite{CP21}, all numerical dimensions we introduced depend only on the numerical class of $D$ in $N^1(X)_\RR$ (see also Lemma~\ref{lem_mul}(2) below). We give some easy facts about numerical dimensions. 

\begin{lemma}\label{lem_mul} 
Let $X$ be a normal projective variety and let $D$ be a pseudo-effective $\mathbb{R}$-Cartier $\mathbb{R}$-divisor on $X$.
Consider $\star\in \{\vol, \sigma\}$ and $\diamond\in\{\inf, \sup\}$.

\begin{enumerate}
 \item The definition of $\kappa_{\vol, \diamond}^\RR$ is independent of the choice of $A$.
 \item For a pseudo-effective $\mathbb{R}$-Cartier $\mathbb{R}$-divisor $D'$ on $X$, $$\kappa^{\RR}_{\star, \diamond}(X, D+D')\geq \kappa^{\RR}_{\star, \diamond}(X, D).$$
 In particular, $\kappa^{\RR}_{\star, \diamond}(X, D)$ depends only on the numerical class of $D$ in $N^1(X)_\RR$. 
 
 \item For any positive integer $k$, $\kappa^{\RR}_{\star, \diamond}(X, kD)=\kappa^{\RR}_{\star, \diamond}(X, D).$
\end{enumerate}
\end{lemma}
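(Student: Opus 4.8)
\emph{Strategy.} The idea is to reduce all three statements to elementary bookkeeping with round-downs, using the following standard facts: $\vol_X(\lambda E)=\lambda^{\dim X}\vol_X(E)$ for $\lambda>0$; the inequality of Weil divisors $\lfloor mE\rfloor+\lfloor mE'\rfloor\le\lfloor m(E+E')\rfloor$, together with $\lfloor mE'\rfloor\ge 0$ when $E'$ is effective; the monotonicity $\vol_X(E+E')\ge\vol_X(E)$ whenever $E'$ is pseudo-effective (reduce to $E'$ big, hence $\equiv$ an effective $\RR$-divisor, using continuity and numerical invariance of $\vol_X$); and Serre's theorem that a coherent sheaf twisted by a large multiple of an ample line bundle is globally generated, which yields the auxiliary fact that for a fixed effective Weil divisor $\Gamma$ and a fixed ample Cartier divisor $A$ there is an ample Cartier divisor $A_1$ with $h^0(X,\O_X(A_1-A-\Gamma))>0$.

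\emph{Part (1).} For $\diamond=\inf$ this is immediate from the reformulation ``$\exists\,C>0$ with $\vol_X(D+tA)>Ct^{\dim X-\ell}$ for all $t>0$'' given in Definition~\ref{ndv}: picking $q\in\ZZ_{>0}$ with $qA'-A$ and $qA-A'$ effective, one has $\vol_X(D+tA')\ge\vol_X(D+\tfrac tqA)>C(\tfrac tq)^{\dim X-\ell}$, and symmetrically, so the admissible range of $\ell$ is the same for $A$ and for $A'$. For $\diamond=\sup$ I argue from \eqref{nvi}: with the same $q$, $\vol_X(mD+A)\le\vol_X(mD+qA')=q^{\dim X}\vol_X(\tfrac mqD+A')$, so $\limsup_m\vol_X(mD+A)/m^\ell\le q^{\dim X-\ell}\limsup_{s\in\frac1q\ZZ}\vol_X(sD+A')/s^\ell$; and since $\vol_X(sD+A')\le\vol_X(\lceil s\rceil D+A')$ because $(\lceil s\rceil-s)D$ is pseudo-effective, the latter $\limsup$ is at most $\limsup_{m'\in\ZZ}\vol_X(m'D+A')/{m'}^\ell$. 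Combining these with the symmetric estimate gives independence of $A$.

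\emph{Parts (2) and (3), the routine cases.} For $\star=\vol$, part (2) is just $\vol_X(m(D+D')+A)\ge\vol_X(mD+A)$, monotonicity under the pseudo-effective class $D'$. For $\star=\sigma$, from $\lfloor m(D+D')\rfloor\ge\lfloor mD\rfloor+\lfloor mD'\rfloor$ it suffices to find a single ample Cartier divisor $A_1$ with $h^0(X,\O_X(\lfloor mD'\rfloor+A_1))>0$ for the relevant $m$; then $h^0(X,\lfloor m(D+D')\rfloor+A+A_1)\ge h^0(X,\lfloor mD\rfloor+A)$ for any ample $A$ witnessing $\kappa^\RR_{\sigma,\diamond}(X,D)\ge\ell$. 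When $D'$ is effective, $\lfloor mD'\rfloor\ge 0$ and any $A_1$ with $h^0(X,\O_X(A_1))>0$ works; and the ``in particular'' statement then follows by applying the main inequality to the pseudo-effective classes $\pm(D_1-D_2)$ when $D_1\equiv D_2$. For part (3), $\lfloor m(kD)\rfloor=\lfloor(mk)D\rfloor$ and $h^0(X,\lfloor(mk)D\rfloor+A)/m^\ell=k^\ell\,h^0(X,\lfloor(mk)D\rfloor+A)/(mk)^\ell$ realises, up to the factor $k^\ell$, a subsequence of the data for $\kappa^\RR_{\star,\diamond}(X,D)$, whence $\kappa^\RR_{\star,\diamond}(X,kD)\le\kappa^\RR_{\star,\diamond}(X,D)$; for the reverse, round $m'$ up to $n=k\lceil m'/k\rceil$, so $0\le n-m'<k$ and $\lfloor nD\rfloor\ge\lfloor m'D\rfloor-\Gamma$ with $\Gamma:=-\min_{0\le j<k}\lfloor jD\rfloor$ a fixed effective Weil divisor supported on $\Supp D$, then absorb $\Gamma+A$ into a larger ample Cartier divisor by the auxiliary fact and use $n\asymp m'$. (For $\star=\vol$ the reverse inequality is simpler: $(n-m')D$ is pseudo-effective, so $\vol_X(nD+A)\ge\vol_X(m'D+A)$ with no correction needed.)

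\emph{The main obstacle.} The one point genuinely requiring care is part (2) for $\star=\sigma$ when $D'$ is pseudo-effective but not effective: then $\{\O_X(\lfloor mD'\rfloor)\}_m$ is an unbounded family of sheaves, so the Serre-vanishing fact above does not by itself produce a uniform $A_1$. I would handle this by first reducing, via part (3) and careful round-down bookkeeping, to an effective $\QQ$-divisor $\QQ$-linearly equivalent to $D'$, using Kodaira's lemma applied to the big divisor $D'+\varepsilon A$; or, after reducing modulo $\Pic^0(X)$, by the uniform Serre vanishing available over the proper variety $\Pic^0(X)$. Either way, this is essentially the numerical invariance (and monotonicity) of $\kappa_\sigma$ established in Nakayama's analysis \cite{Na04}, which one may also simply quote.
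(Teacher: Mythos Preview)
Your overall strategy matches the paper's, and parts (1) and (2) are fine (for the ``main obstacle'' the paper likewise quotes Nakayama's \cite[Theorem~V.1.3 and Proposition~V.2.7(1)]{Na04}). There is, however, a genuine gap in part (3) for $\diamond=\inf$.

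Your subsequence argument does \emph{not} yield $\kappa^{\RR}_{\sigma,\inf}(X,kD)\le\kappa^{\RR}_{\sigma,\inf}(X,D)$. Writing $c_{m'}(D)=h^0(X,\lfloor m'D\rfloor+A)/{m'}^{\ell}$, one has $c_m(kD)=k^{\ell}c_{mk}(D)$, and $(c_{mk}(D))_m$ is a subsequence of $(c_{m'}(D))_{m'}$. For $\liminf$ this gives
\[
\liminf_m c_m(kD)=k^{\ell}\liminf_m c_{mk}(D)\ \ge\ k^{\ell}\liminf_{m'} c_{m'}(D),
\]
which only yields $\kappa^{\RR}_{\sigma,\inf}(X,kD)\ge\kappa^{\RR}_{\sigma,\inf}(X,D)$, the same direction your rounding-up argument already gives. (For $\diamond=\sup$ your claim is correct, since $\limsup$ of a subsequence is $\le$ the full $\limsup$.) The same issue affects the $\star=\vol$ case of (3), though there the paper bypasses it entirely via homogeneity: $\vol_X(mkD+kA)=k^{\dim X}\vol_X(mD+A)$ and independence of $A$ give equality directly.

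The missing inequality $\kappa^{\RR}_{\sigma,\inf}(X,kD)\le\kappa^{\RR}_{\sigma,\inf}(X,D)$ is exactly the step the paper isolates as nontrivial. It is obtained by the mirror of your rounding-up: round $m$ \emph{down} to $sk=k\lfloor m/k\rfloor$, note that $\lfloor mD\rfloor-\lfloor skD\rfloor$ has coefficients bounded independently of $m$ (so one can absorb it into a fixed ample $A_1$), and deduce $h^0(X,\lfloor mD\rfloor+A_1)\ge h^0(X,\lfloor skD\rfloor+A)$ for all $m$. Since every $m$ now appears on the left, this controls the full $\liminf$ for $D$ from below by the $\liminf$ for $kD$, giving the missing direction. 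Adding this symmetric rounding-down step completes your argument.
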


\begin{proof} 

(1) For ample Cartier divisors $A$ and $A'$ on $X$, we can find a positive integer $m_0$ such that $m_0A-A'$ and $m_0A'-A$ are ample. 
Then by \cite[Proposition~2.2.35(i) and Example~2.2.48]{La041},
\begin{align*}
 \vol_X(mD+A)\leq {}& \vol_X(mD+m_0A')\\\leq{}& \vol_X(m_0mD+m_0A')=m_0^{\dim X}\vol_X(mD+A').
\end{align*}
Similarly, $ \vol_X(mD+A')\leq m_0^{\dim X}\vol_X(mD+A)$. This proves (1).

(2) This follows from the proof of \cite[Proposition V.2.7(1)]{Na04} by \cite[Theorem V.1.3]{Na04}. Note that the above results are for non-singular varieties, so here we briefly explain how to reduce the singular case to the non-singular case. 
By Hironaka's desingularization theorem, we may take a resolution of singularities $\pi: X'\to X$. Then 
by \cite[Theorem V.1.3]{Na04}, there exists an ample divisor $A'$ on $X'$ such that
$$
H^0(X', \lfloor \pi^*(mD') \rfloor+A')\neq 0
$$
for any positive integer $m$.
So we have  
\begin{align}\label{eq:mD'A}
H^0(X, \lfloor mD' \rfloor+\pi_*A')\neq 0. 
\end{align}
On the other hand, for an ample divisor $A$ on $X$, there exists a positive integer $m'_0$ such that $H^0(X, m'_0A-\pi_*A')\neq 0$. Combining with \eqref{eq:mD'A}, we obtain
\[
h^0(X, \lfloor m(D+D') \rfloor+(m'_0+1)A)\geq h^0(X, \lfloor mD \rfloor+A)
\]
for any positive integer $m$.
Applying this inequality to the definitions of $\kappa^{\RR}_{\star, \diamond}$, we get the first statement. 

For the second statement, if $D_1\equiv D$, then the first statement implies that 
$\kappa^{\RR}_{\star, \diamond}(X, D_1)\geq \kappa^{\RR}_{\star, \diamond}(X, D)$ and 
$\kappa^{\RR}_{\star, \diamond}(X, D)\geq \kappa^{\RR}_{\star, \diamond}(X, D_1)$ by taking $D'=\pm (D-D_1)\equiv 0$.

(3) By \cite[Proposition~2.2.35(i)]{La041}, we have 
\[ \vol_X(mkD + kA) = k^{\dim X} \vol_X(mD + A)
\]
for an ample Cartier divisor $A$ on $X$, and all positive integers $k, m$. Fixing $k$ and varying $m$, we get $\kappa^{\RR}_{\vol, \diamond}(X, kD)=\kappa^{\RR}_{\vol, \diamond}(X, D).$

By (2) we have $\kappa^{\RR}_{\sigma, \diamond}(X, kD)\geq \kappa^{\RR}_{\sigma, \diamond}(X, D)$. By definition, $\kappa^{\RR}_{\sigma, \sup}(X, kD)\leq \kappa^{\RR}_{\sigma, \sup}(X, D)$ as we are taking $\limsup$. So to conclude the proof we only need to show that $\kappa^{\RR}_{\sigma, \inf}(X, k D) \leq \kappa^{\RR}_{\sigma, \inf}(X, D)$.



Set $\ell=\kappa^{\RR}_{\sigma, \inf}(X, kD)$. Fix a sufficiently small positive real number $\varepsilon$. Then there exists an ample Cartier divisor $A$ on $X$ and a positive real number $C$ such that
\begin{align}\label{eq:m ell-varepsilon}
 h^0(X, \lfloor mkD\rfloor+A)\geq C m^{\ell-\varepsilon}
\end{align}
 for all positive integers $m$.
Write $D=\sum_{i=1}^pa_iD_i$, where $D_i$'s are $p$ distinct prime divisors. 
For a positive integer $m$, write $m=sk+r$, where $s, r$ are non-negative integers and $r\leq k-1$.
Note that 
$\lfloor mD\rfloor-\lfloor skD\rfloor=\sum^p_{i=1} b_iD_i$,
where $b_i=\lfloor ra_i\rfloor$ or $1+\lfloor ra_i\rfloor$ (in particular, $0\leq b_i\leq 1+(k-1)a_i$). We can take a positive integer $m'_1$ such that
$$
h^0(X, nD_i+m'_1A)>0
$$
for any $D_i$ and any integer $0\leq n\leq 1+(k-1)a_i$. Then we can take $m_1=pm'_1$ such that
\begin{align}\label{eq:m1A}
 h^0(X, \lfloor mD\rfloor-\lfloor skD\rfloor+m_1A)>0
\end{align}
for any positive integer $m$.
Combining \eqref{eq:m1A} with \eqref{eq:m ell-varepsilon}, we have
\begin{align*}
{}&h^0(X, \lfloor mD\rfloor+(m_1+1)A)\\
\geq {}&h^0(X, \lfloor skD\rfloor+A)\\
\geq {}&Cs^{\ell-\varepsilon} \geq \frac{C{(m-k)}^{\ell-\varepsilon}}{k^{\ell-\varepsilon}}.
\end{align*}
This implies that
$\kappa^{\RR}_{\sigma, \inf}(D)\geq {\ell-\varepsilon}$. So $\kappa^{\RR}_{\sigma, \inf}(X, D)\geq \ell=\kappa^{\RR}_{\sigma, \inf}(X, kD)$ by the arbitrarity of $\epsilon$. This completes the proof.
\end{proof}

We refer to \cite{CP21, Le22, Mc18} and references therein for more numerical dimensions, their properties, and open questions. It is yet unclear whether various numerical dimensions coincide for pseudo-effective $\QQ$-divisors. For Calabi--Yau varieties, the generalised abundance conjecture together with the cone conjecture (\cite{LOP18}) predicts that for a $\QQ$-divisor $D$ in $\MovC(X)$, its strict transform under a sequence of flops is semi-ample. Hence various numerical dimensions for $D$ should all coincide with the Kodaira dimension of $D$.

\subsection{Calabi--Yau varieties with finite birational index}

We provide a criterion on when a Calabi--Yau variety has finite birational index. 

\begin{lemma}\label{lem finite N} Let $X$ be a Calabi--Yau variety. 
Suppose that one of the following holds:
\begin{enumerate}
 \item $X$ has only finitely many minimal models up to isomorphism;
 \item $X$ is a hyperk\"ahler manifold;
 \item $\dim X \leq 3$.
\end{enumerate}
Then $X$ has finite birational index. 
\end{lemma}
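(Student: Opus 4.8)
The plan is to show that in each of the three cases, the set of minimal models of $X$ up to isomorphism is finite, and then extract a single integer $N$ that makes every Weil divisor on every minimal model Cartier after multiplication by $N$. The key reduction is the following: since $X$ is a Calabi--Yau variety, every minimal model $Y$ of $X$ is obtained by a small birational map $\alpha\colon X\dashrightarrow Y$ (as recalled in Section~\ref{notat}), so $Y$ is again a $\QQ$-factorial terminal Calabi--Yau variety with $\rho(Y)=\rho(X)$. In particular, on each such $Y$ the quotient $\mathrm{Cl}(Y)/\mathrm{Pic}(Y)$ is a finite group: this is a standard consequence of $\QQ$-factoriality together with $h^1(Y,\O_Y)=0$ (so that numerical and linear equivalence agree up to torsion and $\mathrm{Pic}(Y)$ has finite index in the finitely generated group $\mathrm{Cl}(Y)$ modulo numerical equivalence). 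Thus for each individual $Y$ there is an integer $N_Y$ with $N_Y D_Y$ Cartier for all Weil divisors $D_Y$. The content of the lemma is making $N_Y$ uniform over all $Y$.

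For case (1), if $X$ has only finitely many minimal models $Y_1,\dots,Y_k$ up to isomorphism, we simply take $N=\mathrm{lcm}(N_{Y_1},\dots,N_{Y_k})$; one must note that ``up to isomorphism'' suffices because $N_Y$ only depends on the isomorphism class of $Y$. For case (3), $\dim X\le 3$: a three-dimensional terminal singularity is isolated and analytically a cyclic quotient or a well-understood hypersurface singularity, and the Cartier index of a Weil divisor at such a point is bounded by the local index of the singularity; since any minimal model $Y$ is connected to $X$ by a sequence of flops and the set of analytic singularity types appearing is controlled, one gets a uniform bound. Alternatively, and more cleanly, in dimension $\le 3$ one can invoke Kawamata's finiteness of minimal models for terminal threefolds (the cone conjecture is a theorem here in the relevant cases, or one uses that the number of marked minimal models up to the action of $\mathrm{Bir}(X)$ is finite together with the fact that birational automorphisms preserve Cartier indices) to reduce to case (1). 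For case (2), $X$ hyperk\"ahler: here minimal models are again hyperk\"ahler, hence \emph{smooth}, so every Weil divisor is already Cartier and we may take $N=1$.

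The main obstacle is case (3) if one wants a self-contained argument avoiding the full strength of finiteness of minimal models: one needs to argue that, although there may be infinitely many minimal models $Y$ (when $\mathrm{Bir}(X)$ is infinite), the local Cartier indices of their singularities are uniformly bounded. The clean way around this is to observe that all minimal models of $X$ fall into finitely many isomorphism classes modulo the action of $\mathrm{Bir}(X)$ on $X$ — this is a consequence of the termination/finiteness results for threefold flops — and that an element of $\mathrm{Bir}(X)$ between two minimal models, being small, induces an isomorphism $\mathrm{Cl}(Y)\cong\mathrm{Cl}(Y')$ carrying $\mathrm{Pic}$ to $\mathrm{Pic}$, so the index $N_Y$ is constant on each orbit. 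Hence finitely many values of $N_Y$ occur, and their lcm works; this reduces (3) to (1) and is where I would be most careful about citing the precise known finiteness statement.
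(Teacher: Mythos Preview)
Your treatment of cases (1) and (2) matches the paper: for (1) take the lcm of the finitely many Cartier indices (the paper simply cites \cite{HLQ20}), and for (2) minimal models of a hyperk\"ahler manifold are again smooth, so $N=1$.

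Case (3), however, has a genuine gap. Both of your proposed routes fail. The reduction to case (1) relies on the assertion that finiteness of minimal models is known for Calabi--Yau threefolds, but this is precisely the Morrison--Kawamata cone conjecture, which is \emph{open} in dimension three (the paper's Remark immediately after the lemma says so explicitly). Your fallback argument, that $\Bir(X)$-orbits of minimal models have constant Cartier index because a small map ``carries $\Pic$ to $\Pic$'', is also incorrect: a small birational map identifies $\mathrm{Cl}(Y)\cong\mathrm{Cl}(Y')$, but a Cartier divisor on $Y$ may well become only $\QQ$-Cartier on $Y'$ (flops can introduce singularities), so Cartier indices need not be preserved. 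Your first alternative, bounding singularity types under flops, is not fleshed out and is not obviously uniform.

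The paper's argument for (3) is entirely different and much shorter. Take $N$ with $NK_X\sim 0$; since any minimal model $Y$ is connected to $X$ by a small map, $NK_Y\sim 0$ as well, so $K_Y$ has Cartier index dividing $N$. Now invoke Kawamata's result \cite[Corollary~5.2]{Ka88}: for a threefold with terminal singularities, the Cartier index of \emph{every} Weil divisor divides the Cartier index of $K_Y$. Hence $N$ works uniformly for all $Y$, with no finiteness of minimal models required. This special feature of terminal threefold singularities is the missing ingredient in your proposal.
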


\begin{proof}
If $X$ has only finitely many minimal models, then the existence of $N$ follows from \cite[Theorem 1.10]{HLQ20}.

If $\dim X\leq 2$ or $X$ is a hyperk\"ahler manifold, then this is trivial as all minimal models of $X$ are smooth (the hyperk\"ahler case follows from \cite[Corollary~1]{Nami06}).

If $\dim X=3$, take a positive integer $N$ such that $NK_X\sim 0$. Then for any minimal model $Y$ of $X$, $NK_Y\sim 0$. In particular, $NK_Y$ is Cartier. Then by \cite[Corollary~5.2]{Ka88}, $ND_Y$ is Cartier for every Weil divisor $D_Y$ on $Y$.
\end{proof}
\begin{remark}
The finiteness of minimal models of a Calabi--Yau variety is predicted by the Morrison--Kawamata cone conjecture (\cite[Conjecture 1.12]{Ka97}), which is widely open even in dimension three. We refer to \cite{CO15, HT18, LW22, Og14, Og18, Sk17, Wa22, Ya22} for examples of Calabi--Yau varieties that admit finitely many minimal models up to isomorphism. 
\end{remark}


\section{Comparing numerical dimensions on Calabi--Yau varieties}\label{prf1.5}

In this section we investigate the relation between the two numerical dimensions $\kappa^{\RR}_{\sigma}$ and $\kappa^{\RR}_{\vol}$ for Calabi--Yau varieties with finite birational index. 
In order to compare $\kappa^{\RR}_{\sigma}$ and $\kappa^{\RR}_{\vol}$, we need to compare the growth of global sections of big divisors with their volumes effectively.

First we prove an estimate for base point free and big Cartier divisors by induction.



\begin{lemma}\label{lem bpf L}
Let $n$ be a positive integer.
Then there exists a positive integer $k_n$ and a positive real number $\delta_n$ satisfying the following property. 

Let $X$ be a normal projective variety of dimension $n$ with klt singularities and $H$ a base point free Cartier divisor on $X$ such that $H$ and $H-K_X$ are nef and big.
Then for any integer $k\geq k_n$, we have $h^0(X, kH)> \delta_n k^n(H^n)$.
\end{lemma}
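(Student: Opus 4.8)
The plan is to prove the estimate $h^0(X,kH) > \delta_n k^n (H^n)$ by induction on $n = \dim X$, using a general member of the base point free linear system $|H|$ to cut down dimension. For the base case $n = 0$ (or $n=1$) the statement is elementary: $h^0(X, kH) = k(H\cdot \text{pt}) + O(1)$ on a curve, and on a point it is trivial. For the inductive step, since $H$ is base point free, by Bertini a general member $S \in |H|$ is a normal projective variety (irreducible once $\dim X \geq 2$, because $H$ is big so $|H|$ is not composed with a pencil); moreover, by a standard Bertini-type argument for klt pairs (adjunction), $S$ has klt singularities, with $K_S = (K_X + S)|_S = (K_X + H)|_S$ by adjunction.

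The key steps, in order, are as follows. First I would set $H_S = H|_S$; this is a base point free Cartier divisor on $S$, and $H_S$ is nef and big (the restriction of a nef and big divisor to a general member of a base point free system stays big, since $(H_S^{n-1}) = (H^n) > 0$). Next I must check $H_S - K_S$ is nef and big: we have $H_S - K_S = (H - K_X - H)|_S = (-K_X)|_S$... this is the wrong sign, so instead I would write $H_S - K_S = (H - (K_X+H))|_S = -K_X|_S$, which need not be big. This is the main obstacle — so the inductive hypothesis cannot be applied with $H_S$ directly. The fix is to instead work with the divisor $H_S$ on $S$ together with the observation that what we actually need on $S$ is control of $h^0(S, kH_S)$ in terms of $(H_S^{n-1})$; so I would re-run the induction using, on $S$, the divisor $H_S$ and noting $H_S - K_S = (2H - K_X - H)|_S$... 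Let me instead set up the induction so that the hypothesis ``$H - K_X$ nef and big'' is used directly via the exact sequence $0 \to \O_X((k-1)H) \to \O_X(kH) \to \O_S(kH_S) \to 0$: the cohomology $H^1(X, (k-1)H) $ vanishes provided $(k-1)H - K_X$ is nef and big by Kawamata–Viehweg vanishing (valid for klt $X$), which holds for all $k \geq 2$ since $(k-1)H$ is nef and $H - K_X$ is nef and big. Hence $h^0(X,kH) \geq h^0(X,(k-1)H) + h^0(S, kH_S)$ for all $k \geq 2$, and iterating, $h^0(X, kH) \geq \sum_{j=2}^{k} h^0(S, j H_S) + h^0(X, H)$.

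It then remains to give a lower bound for $h^0(S, j H_S)$. Here $S$ is klt of dimension $n-1$, $H_S$ base point free nef and big; to invoke the inductive hypothesis I need $H_S - K_S$ nef and big, i.e. $-K_X|_S$ nef and big, which is false in general — so the cleanest route is to not demand the full strength on $S$ but rather to prove the sharper statement that it suffices to bound $h^0(S, jH_S)$ from below by $\delta_{n-1}' j^{n-1}(H_S^{n-1})$ for $j$ large, and to obtain that bound by the same Kawamata–Viehweg argument applied on $S$: $h^0(S, jH_S) \geq h^0(S, (j-1)H_S) + h^0(S', jH_{S'})$ where now we need $H^1(S, (j-1)H_S) = 0$, which follows if $(j-1)H_S - K_S$ is nef and big; since $K_S = -K_X|_S$ up to the correction and $H_S$ is nef and big, $(j-1)H_S - K_S$ is nef and big for $j \gg 0$ regardless of the sign of $K_S$, because it is (nef and big) $+$ (a fixed divisor) and bigness is an open condition. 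Thus I would reformulate the induction to carry the hypothesis only for the top variety $X$, using it once to start the cohomology telescoping, and then observe that on each successive general hyperplane section the required vanishing holds automatically for large twist since we have a fixed nef and big divisor plus a bounded error. Iterating $n$ times down to a curve or point, where $h^0 \geq cj + O(1)$ with $c = (H^n)$, and summing the resulting geometric-type sums gives $h^0(X,kH) \geq \delta_n k^n (H^n)$ for $k \geq k_n$, with $\delta_n, k_n$ depending only on $n$. The main obstacle, as flagged, is that $H - K_X$ being nef and big does not pass to hyperplane sections, so the induction must be arranged so that this hypothesis is consumed exactly once, with all further vanishing coming from bigness being open under adding a fixed class.
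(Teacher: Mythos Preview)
Your overall framework matches the paper's: induction on $n$ via a general $S \in |H|$ (klt by Bertini for pairs), the restriction sequence, and Kawamata--Viehweg vanishing on $X$ (from $H-K_X$ nef and big) to telescope $h^0(X,kH) \geq \sum_{j=2}^{k} h^0(S, jH|_S)$. You also correctly locate the obstacle: $H|_S - K_S = -K_X|_S$ need not be nef and big, so one cannot apply the inductive hypothesis to $(S, H|_S)$.

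The gap is in your fix. You claim that $(j-1)H|_S - K_S$ is nef and big for $j \gg 0$ ``because it is (nef and big) $+$ (a fixed divisor) and bigness is an open condition'', and then iterate through all dimensions. But this reasoning gives neither nefness (a multiple of a nef class plus a fixed non-nef class need not ever be nef) nor a threshold depending only on $n$: if $-K_S$ is treated as an arbitrary ``fixed divisor'', the required $j$ depends on $S$ and hence on $X$, so the final constants $k_n,\delta_n$ would not be uniform --- which is exactly what the lemma asserts. A correct argument must use adjunction explicitly: $(j-1)H|_S - K_S = ((j-2)H - K_X)|_S$, which for $j\geq 3$ is the restriction of a nef and big divisor to a general member of a base point free big system, hence itself nef and big. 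You are one line away from this but never write it, and the phrasing ``regardless of the sign of $K_S$'' shows you are not using it.

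The paper sidesteps the whole issue with a single substitution that restores a clean induction: apply the inductive hypothesis on $S$ not to $H|_S$ but to $2H|_S$. Then
\[
2H|_S - K_S \;=\; (2H - K_X - H)|_S \;=\; (H-K_X)|_S,
\]
which is nef and big on $S$, so $(S,\,2H|_S)$ satisfies the full hypotheses of the lemma in dimension $n-1$. One then keeps only the even terms $h^0(S, 2iH|_S)$ in the telescoped sum and obtains explicit recursions $k_n = 8k_{n-1}$, $\delta_n = \delta_{n-1}/(2^{n+2}n)$. This is both simpler than unrolling the induction by hand and automatically uniform in $n$.
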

\begin{proof}
We do induction on $n$. 

If $n=1$, then $X$ is a smooth projective curve and $\deg H>\deg K_X=2g(X)-2$.
For $k\geq 1$, by the Riemann--Roch theorem, 
$$
h^0(X, kH)=k\deg H+1-g(X)> \frac{k}{2}\deg H.
$$
So we can take $k_1=1$ and $\delta_1=\frac{1}{2}$.

For $n>1$, as $H$ is base point free, we may assume that $H$ is a normal projective variety of dimension $n-1$ with klt singularities by the Bertini theorem (see \cite[Lemma~5.17]{KM98}). 
Note that $K_H=(K_X+H)|_H$ by the adjunction formula. So $2H|_H$ and $2H|_H-K_H$ are nef and big.
By the inductive hypothesis, for any integer $k\geq k_{n-1}$, 
\begin{align}\label{eq:3.1-1}
    h^0(H, 2kH|_H)> \delta_{n-1} k^{n-1}(2H|_H)^{n-1}=\delta_{n-1} (2k)^{n-1}(H^{n}).
\end{align}
 
Consider the short exact sequence
$$
0\to \mathcal{O}_X((k-1)H)\to \mathcal{O}_X(kH)\to \mathcal{O}_H(kH)\to 0.
$$
By the Kawamata--Viehweg vanishing theorem, $H^1(X, (k-1)H)=0$ for any integer $k\geq 2$ as $(k-1)H-K_X$ is nef and big.
So 
\begin{align}\label{eq:3.1-2}
h^0(X, kH)=h^0(X, (k-1)H)+h^0(H, kH|_H)
\end{align}
for any integer $k\geq 2$.
So if $k\geq 8k_{n-1}$, we have
\begin{align*}
 h^0(X, kH)\geq {}&\sum_{i=2}^k h^0(H, iH|_H)
 \geq \sum_{i=k_{n-1}}^{\lfloor k/2\rfloor} h^0(H, 2iH|_H)\\
 \geq {}&\sum_{i=k_{n-1}}^{\lfloor k/2\rfloor}\delta_{n-1} (2i)^{n-1}(H^{n})
 > \frac{\delta_{n-1}}{2^{n+2}n}k^n(H^{n}).
\end{align*}
Here for the first inequality we use \eqref{eq:3.1-2}, and for the third one we use \eqref{eq:3.1-1}.
For the last step, we use the fact that
\begin{align*}
 \sum_{i=k_{n-1}}^{\lfloor k/2\rfloor}(2i)^{n-1}> {}& \int_{k_{n-1}-1}^{\lfloor k/2\rfloor} (2t)^{n-1} \, \textrm{d}t\\
 = {}& \frac{1}{2n}((2\lfloor k/2\rfloor)^n-(2k_{n-1}-2)^n)\\
 \geq {}& \frac{1}{2n}\left(\left(\frac{k}{2}\right)^n-(2k_{n-1}-2)^n\right)\\
 \geq {}&\frac{1}{2n}\cdot \frac{1}{2}\cdot \left(\frac{k}{2}\right)^n.
\end{align*}
Here for the first strict inequality, we use the fact that $(2t)^{n-1}$ is a strictly increasing function in $t$ so that 
$(2i)^{n-1}>  \int_{i-1}^{i} (2t)^{n-1}\, \textrm{d}t$ for each $i$, and for the last step we just use $k \geq 8k_{n-1}$.

So we may take $k_n=8k_{n-1}$ and $\delta_{n}=\frac{\delta_{n-1}}{2^{n+2}n}$.
\end{proof}

For Calabi--Yau varieties we can get an effective estimate for movable Weil divisors.

\begin{lemma}\label{lem_h0vol} Let $X$ be a Calabi--Yau variety of dimension $n$ with finite birational index.
Then there exist positive constants $C_{0}$, $C_1 > 0$, such that for any Weil divisor $D$ whose class lies in $\Mov(X)^{\circ}$ and any positive integer $k > C_0$, we have
\[ h^0(X, kD) > C_{1} k^n \vol_X(D). 
\]
\end{lemma}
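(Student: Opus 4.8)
The plan is to reduce the statement to Lemma~\ref{lem bpf L} by running a minimal model program and using the finite birational index hypothesis to convert between Weil divisors and Cartier divisors uniformly. First I would use the fact that $X$ is a Calabi--Yau variety with finite birational index: let $N$ be a positive integer such that $ND_Y$ is Cartier for every Weil divisor $D_Y$ on every minimal model $Y$ of $X$. Given a Weil divisor $D$ with class in $\Mov(X)^{\circ}$, since $X$ has finite birational index (and is a Calabi--Yau variety, so minimal models are related by small birational maps preserving movable cones, volumes, and global sections), I would like to find a minimal model $Y$ of $X$ and a small birational map $\alpha\colon X\dashrightarrow Y$ such that the strict transform $D_Y=\alpha_*D$ is nef on $Y$. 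The existence of such a model for a movable (interior) class is the key geometric input; it follows from running a $D$-MMP (or from the relevant results on movable cones of Calabi--Yau varieties), and $\alpha$ being small gives $h^0(X,kD)=h^0(Y,kD_Y)$ and $\vol_X(D)=\vol_Y(D_Y)$ for all $k$, so it suffices to prove the estimate on $Y$.

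Next, on $Y$ the divisor $D_Y$ is nef, and $ND_Y$ is a nef Cartier divisor with $(ND_Y)^n=N^n\vol_Y(D_Y)$ (using $\vol=($top self-intersection$)$ for nef divisors). I would like to apply Lemma~\ref{lem bpf L}, which requires a \emph{base point free} and big Cartier divisor $H$ with $H-K_Y$ also nef and big. Since $K_Y\equiv 0$ (Calabi--Yau), the conditions on $H-K_Y$ are equivalent to those on $H$, so I only need $H$ base point free and big. However $D_Y$ need not be big (indeed in the applications it typically is not). To handle this I would perturb: for an ample Cartier divisor $A$ on $Y$, the class $ND_Y+A$ is ample, and more to the point, replacing $D_Y$ by $D_Y+\epsilon A$ makes it big. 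But $D_Y$ may also fail to be base point free even when big. The standard fix is base point free-ness theorem: since $Y$ is klt and $K_Y\equiv 0$, for a sufficiently divisible multiple $m(ND_Y)+A'$ (with $A'$ ample Cartier or even for $m_0(ND_Y)$ itself when $ND_Y$ is nef and big, by the basepoint-free theorem applied to $K_Y+m_0(ND_Y)$) one obtains a base point free Cartier divisor.

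Here is where the main obstacle lies: reconciling the two potential failures — $D_Y$ may be neither big nor base point free — while keeping the estimate in terms of $\vol_X(D)$ rather than $\vol_X(D+\epsilon A)$. If $D_Y$ is big, I would run the argument: take $m_0$ with $H=m_0 N D_Y$ base point free (basepoint-free theorem), $H$ big; then $h^0(X,kD)=h^0(Y,kD_Y)\ge h^0(Y,(k/(m_0N))H\text{-ish})$, and applying Lemma~\ref{lem bpf L} to $H$ gives $h^0(Y,jH)>\delta_n j^n(H^n)=\delta_n j^n (m_0 N)^n\vol_Y(D_Y)$ for $j\ge k_n$; unwinding, $h^0(X,kD)>C_1 k^n\vol_X(D)$ for $k>C_0$ with $C_0,C_1$ depending only on $n$ and $N$ (crucially not on $D$, since $m_0$ divides out and the final inequality is scale-invariant by Lemma~\ref{lem_mul}(3) once we pass to the base point free multiple). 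If $D_Y$ is \emph{not} big, then $\vol_X(D)=\vol_Y(D_Y)=0$, and the desired inequality $h^0(X,kD)>C_1 k^n\cdot 0=0$ holds trivially as soon as $h^0(X,kD)\ge 1$, which is true for $k$ large since $D_Y$ is nef on a klt Calabi--Yau $Y$ (nef divisors on such $Y$ are semiample by the base point free theorem, or at least have sections). So the case division big/non-big disposes of the difficulty, and the genuinely delicate point is verifying that the constants $C_0,C_1$ can be chosen uniformly over all $D$ — which works because the finite birational index $N$ is uniform and the passage to a base point free multiple only changes the constants by factors depending on $N$ and $n$, together with the homogeneity of the conclusion under $D\mapsto kD$.
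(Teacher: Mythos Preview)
Your overall strategy --- pass to a minimal model where the strict transform is nef, then apply Lemma~\ref{lem bpf L} to a base point free Cartier multiple --- matches the paper's. There are, however, two genuine gaps.

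First, the big/non-big case split is unnecessary: since $\Mov(X)^\circ$ is open and contained in $\EffC(X)$, it lies in $\BigC(X)$, so any $[D]\in\Mov(X)^\circ$ is automatically big, and hence so is its strict transform $D_Y$. (Your argument for the non-big branch is also incorrect as stated: nef divisors on a klt Calabi--Yau variety are not known to be semiample, or even effective --- that is precisely the abundance conjecture.)

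Second, and more seriously, your uniformity claim for $C_0$ does not follow. You observe that ``$m_0$ divides out'': indeed, if $H=m_0ND_Y$ and $k=jm_0N$ with $j\ge k_n$, Lemma~\ref{lem bpf L} gives $h^0(Y,kD_Y)>\delta_n k^n(D_Y^n)$, so $C_1=\delta_n$ is uniform. But this holds only for $k\in m_0N\ZZ$, and the threshold $k\ge k_n m_0 N$ depends on $m_0$, which the ordinary base point free theorem supplies only as a function of $D_Y$. The paper closes this with two \emph{effective} inputs: Koll\'ar's effective base point freeness gives an $m_1$ depending only on $n$ with $m_1ND'$ base point free on every minimal model; and Birkar's effectivity result gives an $m_2$ depending only on $n$ with $h^0(X',kD')>0$ for all $k\ge m_2$, so that for general $k$ one writes $k = sm_1N + r$ with $r\ge m_2$ and bounds $h^0(X',kD')\ge h^0(X',sm_1ND')$. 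Without these effective bounds, your $C_0$ genuinely depends on $D$. (The appeal to Lemma~\ref{lem_mul}(3) does not help: that lemma is about the invariants $\kappa^\RR_{\star,\diamond}$, not the quantitative $h^0$ estimate you need here.)
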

\begin{proof}
By the proof of \cite[Proposition 4.6]{Wa22}, there exists a minimal model $X'$ of $X$ with a small birational map $f: X \dashrightarrow X'$, where $X'$ is a Calabi--Yau variety, such that $D' = f_*D$ is nef and big on $X'$. 
Note that $D'$ is a Weil divisor and $X$ has finite birational index, so there exists a positive integer $N$ independent of $D$ such that $ND'$ is Cartier.
By \cite[Theorem~1.1]{Kol-ebpf}, there exists a positive integer $m_1$ depending only on $n$ such that $m_1ND'$ is base point free. By \cite[Theorem~1.1]{Bir20}, there exists a positive integer $m_2$ depending only on $n$ such that $h^0(X', kD')>0$ for any integer $k\geq m_2$.

Consider $k_n$ and $\delta_n$ as in Lemma~\ref{lem bpf L}.
For any integer $k\geq 2(m_2+k_nm_1N)$, take $s=\lfloor \frac{(k-m_2)}{m_1N} \rfloor$, then $s\geq k_n$ and $k-sm_1N\geq m_2$, so by Lemma~\ref{lem bpf L},
\begin{align*}
 h^0(X', kD')\geq{}& h^0(X', sm_1ND')> \delta_n (sm_1N)^n(D'^n)\\
 \geq{}& \delta_n(k-m_2-m_1N)^n(D'^n)\\
 \geq {}&\frac{\delta_n}{2^n}k^n(D'^n).
\end{align*}
Note that $h^0(X', kD') = h^0(X, kD)$ for any positive integer $k$ and $(D'^n) =\vol_{X'}(D')= \vol_{X}(D)$. So we may take $C_0 = 2(m_2+k_nm_1N)$ and $C_{1} = \frac{\delta_n}{2^n}$.

In the end, we remark that from the proof, the positive constants $C_{0}$ and $C_1$ depend only on $n$  and $N$ (the constant from the definition of having finite birational index). 
\end{proof}

So far we only dealt with Weil divisors, but in practice we often need to deal with $\RR$-divisors, so we need to reduce the case of $\RR$-divisors to Weil divisors.

\begin{lemma}\label{lem_rdvol} Let $X$ be a $\mathbb{Q}$-factorial normal projective variety. Fix a reduced divisor $P$ on $X$. Suppose that $P$ has $p$ irreducible components. Fix an ample Cartier divisor $A$ on $X$ with $A-pP_i$ ample for each irreducible component $P_i$ of $P$.
Then for every $\RR$-divisor $D$ with $[D] \in \MovC(X)$ and $\{D\}\leq P$, we have
	
	\begin{enumerate}
	 \item the class of $\lfloor D \rfloor + A$ is in $\Mov(X)^{\circ}$, and 
	 
	 \item $\vol_X(\lfloor D \rfloor + 2A) \geq \vol_X(D + A)$. 
	\end{enumerate}
\end{lemma}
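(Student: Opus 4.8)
\textbf{Proof proposal for Lemma~\ref{lem_rdvol}.}

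The plan is to write $D = \lfloor D\rfloor + \{D\}$ and exploit that the fractional part $\{D\}$ is an effective $\RR$-divisor supported on $P$ whose coefficients are all less than $1$. For part (1), the class $[D]$ lies in $\MovC(X)$, which is the closure of $\Mov(X)$, so $[D]$ is a limit of movable classes; adding the interior point $[A]$ of the nef (hence pseudo-effective, and in fact big) cone pushes $[D]+[A]$ into the interior of $\Mov(X)$, since $\Mov(X)^\circ$ contains $\MovC(X) + \Amp(X)$ — concretely, $\Mov(X)$ contains an ample class (pull back an ample class, or note ample divisors are movable), so $\MovC(X)+\Amp(X)\subseteq \Mov(X)^\circ$. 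Then I would write
\[
\lfloor D\rfloor + A = \bigl(D + A\bigr) - \{D\} = \bigl(D + (A - pP_{i_1})\bigr) + \bigl((A-pP_{i_2}) + \cdots\bigr)
\]
— more precisely, since $\{D\} = \sum c_i P_i$ with $0\le c_i<1$ and there are $p$ components, I bound $\{D\}\le P = \sum P_i \le \sum \tfrac1p (A\text{-correction})$; the point is that $A - \{D\}$ differs from the sum $\sum_i \tfrac1p(A - pP_i) \ (= A - P)$ by the effective divisor $P - \{D\}$, and each $A - pP_i$ is ample by hypothesis, so $A - \{D\}$ is an ample $\RR$-divisor plus an effective divisor, hence big; adding the movable class $[D]$ keeps us in $\Mov(X)^\circ$ by Lemma~\ref{lem_mul}-style monotonicity (adding a pseudo-effective class stays in the movable cone once we are already interior). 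This gives (1).

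For part (2), the inequality to prove is $\vol_X(\lfloor D\rfloor + 2A) \ge \vol_X(D+A)$. The idea is that $\lfloor D\rfloor + 2A$ dominates $D + A$ in the sense of differing by a pseudo-effective (indeed effective) $\RR$-divisor:
\[
(\lfloor D\rfloor + 2A) - (D + A) = A - \{D\},
\]
and by the argument above $A - \{D\} = (A - P) + (P - \{D\})$ is a sum of an ample class $A-P$ — wait, $A-P$ need not be ample, but $A - \{D\}$ is big since $A - \{D\} \succeq$ an ample divisor, because $\{D\}\le P$ and $A - pP_i$ ample forces $A - P = \tfrac1p\sum_i(A - pP_i) + \tfrac{p-1}{p}A$ to be ample, hence $A - \{D\} = (A-P) + (P - \{D\})$ is ample-plus-effective, in particular pseudo-effective. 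Then I invoke monotonicity of the volume function: if $E$ is pseudo-effective then $\vol_X(M + E)\ge \vol_X(M)$ for any $\RR$-Cartier $\RR$-divisor $M$ (this is \cite[Proposition~2.2.35(i)]{La041} or its $\RR$-divisor analogue), applied with $M = D+A$ and $E = A - \{D\}$.

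The main obstacle — or at least the only point needing care — is making the two reductions fully rigorous at the level of $\RR$-divisors and numerical versus linear equivalence: the volume and movability are numerical, so I must argue with actual divisors, keeping track of the $p$ components of $P$ and the bound $0\le c_i<1$ to absorb $\{D\}$ into the "$pP_i$ ample" hypothesis. I expect no serious difficulty: the combinatorial bookkeeping is that $\{D\} = \sum_{i=1}^p c_iP_i \le \sum_{i=1}^p P_i$, so $A - \{D\}$ is (numerically equivalent to) $\sum_{i=1}^p \tfrac1p(A - pP_i) + \sum_{i=1}^p(1-c_i)P_i$, an ample $\RR$-divisor plus an effective $\RR$-divisor, hence big; this single fact drives both parts.
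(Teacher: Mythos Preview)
Your argument for part~(2) is correct and is essentially the paper's proof: you observe that
\[
(\lfloor D\rfloor + 2A) - (D + A) = A - \{D\}
\]
is pseudo-effective (in fact big, in fact ample --- see below), and then invoke the monotonicity of volume. The paper cites \cite[Example~2.2.48]{La041} rather than Proposition~2.2.35(i), but the content is the same.

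Part~(1), however, has a genuine gap. You correctly decompose
\[
A - \{D\} \;=\; \sum_{i=1}^{p}\tfrac{1}{p}(A - pP_i) \;+\; \sum_{i=1}^{p}(1 - c_i)P_i,
\]
which shows that $A - \{D\}$ is ample-plus-effective, hence big. But the step ``adding the movable class $[D]$ keeps us in $\Mov(X)^\circ$'' is not valid: $\MovC(X) + \BigC(X)$ is \emph{not} contained in $\Mov(X)^\circ$ in general (e.g.\ take $D=0$ and any big divisor whose stable base locus has a divisorial component). What you need is $\MovC(X) + \Amp(X) \subset \Mov(X)^\circ$, which you stated earlier --- so you must show that $A - \{D\}$ is \emph{ample}, not merely big. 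Your decomposition does not give this, because the effective tail $\sum (1-c_i)P_i$ need not be nef. (Your intermediate formula $A - P = \tfrac{1}{p}\sum_i(A-pP_i) + \tfrac{p-1}{p}A$ is also arithmetically off; the right-hand side equals $\tfrac{2p-1}{p}A - P$.)

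The fix is a different convex combination, which is exactly what the paper uses: writing $\{D\} = \sum_i e_i P_i$ with $0\le e_i<1$,
\[
A - \{D\} \;=\; \Bigl(1 - \sum_{i=1}^{p}\tfrac{e_i}{p}\Bigr)A \;+\; \sum_{i=1}^{p} e_i\cdot \tfrac{1}{p}(A - pP_i).
\]
All coefficients are nonnegative (since $\sum e_i < p$), the first is strictly positive, and every summand is ample, so $A - \{D\}$ is ample. Then $\lfloor D\rfloor + A = D + (A - \{D\}) \in \MovC(X) + \Amp(X) \subset \Mov(X)^\circ$, and part~(1) follows.
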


\begin{proof} 
We may write $\{D\}=\sum_{i=1}^pe_iP_i$, where $0\leq e_i<1$. Then 
\[A-\{D\}=\left(1-\sum_{i=1}^p\frac{e_i}{p}\right)A+\sum_{i=1}^pe_i\left(\frac{1}{p}A-P_i\right)\] is ample. 
So	$\floor{D} + A = D + A - \{D\}$ and its class is in $\Mov(X)^{\circ}$. 
Note that $D + A$ is big and
$\floor{D} + 2A - (D + A)=A-\{D\}$ is ample, hence by \cite[Example 2.2.48]{La041}, 
	$\vol_X(\lfloor D \rfloor + 2A) \geq \vol_X(D + A). $
\end{proof}

\begin{proposition}\label{prop_com} Let $X$ be a Calabi--Yau variety of dimension $n$ with finite birational index. Fix a reduced divisor $P$ on $X$.
Suppose that $P$ has $p$ irreducible components. 

Then there exist positive constants $C_{0}$, $C_{1} > 0$ such that for an ample Cartier divisor $A$ on $X$ with $A-pP_i$ ample for each irreducible component $P_i$ of $P$, for every $\RR$-divisor $D$ with $[D] \in \MovC(X)$ and $\{D\}\leq P$, and for any positive integer $k > C_{0}$, we have 
\[ h^0(X, \lfloor kD \rfloor + 2kA) > C_{1} k^n\vol_X(D + A). 
\]
\end{proposition}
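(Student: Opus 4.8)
The plan is to reduce the estimate to Lemma~\ref{lem_h0vol}. The point to get around is that $\lfloor kD\rfloor+2kA$ is not, in general, a fixed multiple of a single Weil divisor, so it cannot be fed into that lemma directly. Instead I would apply Lemma~\ref{lem_h0vol} to the $k$-independent Weil divisor $E:=\lfloor D\rfloor+2A$, and bridge the gap by the elementary superadditivity $\lfloor kD\rfloor\geq k\lfloor D\rfloor$ of the floor, handling the rounding of the $\RR$-divisor $D$ with Lemma~\ref{lem_rdvol}.

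Concretely, first I would note that $\lfloor kx\rfloor\geq k\lfloor x\rfloor$ for every real $x$ and every positive integer $k$, so, since $2kA=k\cdot 2A$ is integral, the divisor $\lfloor kD\rfloor+2kA-k(\lfloor D\rfloor+2A)=\lfloor kD\rfloor-k\lfloor D\rfloor$ is effective; hence $h^0(X,\lfloor kD\rfloor+2kA)\geq h^0(X,kE)$ with $E=\lfloor D\rfloor+2A$. Next I would verify that $[E]\in\Mov(X)^{\circ}$: since $A$ and $A-pP_i$ are ample, so is $2A-pP_i=A+(A-pP_i)$, and therefore Lemma~\ref{lem_rdvol}(1), applied with $2A$ in place of $A$ and using $[D]\in\MovC(X)$ and $\{D\}\leq P$, yields $[\lfloor D\rfloor+2A]\in\Mov(X)^{\circ}$. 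Because $X$ is a Calabi--Yau variety of dimension $n$ with finite birational index, Lemma~\ref{lem_h0vol} then provides positive constants $C_0$ and $C_1$, depending only on $n$ and on the birational index and in particular not on $A$, $D$ or $P$, such that $h^0(X,kE)>C_1 k^n\vol_X(E)$ for every integer $k>C_0$. Finally, Lemma~\ref{lem_rdvol}(2) applied to $D$ gives $\vol_X(E)=\vol_X(\lfloor D\rfloor+2A)\geq\vol_X(D+A)$. Putting these together, for every integer $k>C_0$,
\[ h^0(X,\lfloor kD\rfloor+2kA)\geq h^0(X,kE)>C_1 k^n\vol_X(E)\geq C_1 k^n\vol_X(D+A), \]
which is the desired inequality, with $C_0$ and $C_1$ inherited from Lemma~\ref{lem_h0vol}.

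I do not expect a genuine obstacle here. The only step with any content is the decision to apply the effective estimate of Lemma~\ref{lem_h0vol} to the fixed divisor $\lfloor D\rfloor+2A$ rather than to $\lfloor kD\rfloor+2kA$, bridged by the trivial inequality $\lfloor kD\rfloor\geq k\lfloor D\rfloor$; the remaining items --- the interiority $[\lfloor D\rfloor+2A]\in\Mov(X)^{\circ}$ and the uniformity of the constants in $A$, $D$ and $P$ --- are routine and already supplied by Lemmas~\ref{lem_rdvol} and \ref{lem_h0vol}.
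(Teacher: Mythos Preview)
Your proof is correct and is essentially identical to the paper's own argument: both apply Lemma~\ref{lem_h0vol} to the fixed Weil divisor $\lfloor D\rfloor+2A$, use the superadditivity $\lfloor kD\rfloor\geq k\lfloor D\rfloor$ to bound $h^0(X,\lfloor kD\rfloor+2kA)$ from below by $h^0(X,k(\lfloor D\rfloor+2A))$, and invoke Lemma~\ref{lem_rdvol} for the interiority of $[\lfloor D\rfloor+2A]$ and the volume comparison $\vol_X(\lfloor D\rfloor+2A)\geq\vol_X(D+A)$. The only cosmetic difference is that the paper deduces $[\lfloor D\rfloor+2A]\in\Mov(X)^\circ$ from $[\lfloor D\rfloor+A]\in\Mov(X)^\circ$ (Lemma~\ref{lem_rdvol}(1)) plus the addition of an ample, whereas you apply Lemma~\ref{lem_rdvol}(1) directly with $2A$ in place of $A$.
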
 

\begin{proof}
We can take $C_0, C_1$ as in Lemma~\ref{lem_h0vol}. By Lemma~\ref{lem_rdvol}, the class of $\lfloor D \rfloor + A$ lies in $\Mov(X)^{\circ}$. 
We can apply Lemma~\ref{lem_h0vol} to $\lfloor D + 2A \rfloor = \lfloor D \rfloor + 2A$ to get 
	\begin{align*}
h^0(X, \lfloor kD\rfloor + 2kA)\geq {}&	h^0(X, k\lfloor D + 2A \rfloor)\\
>{}& C_{1} k^n\vol_X(\lfloor D + 2A \rfloor) \geq C_{1} k^n\vol_X(D + A)
	\end{align*}
	for any positive integer $k>C_0$, where the last inequality follows from Lemma~\ref{lem_rdvol}(2).
\end{proof}



Now we are ready to prove Theorem~\ref{com}. 

\begin{proof}[Proof of Theorem~\ref{com}] 
Fix an $\RR$-divisor $D$ whose class is in $\MovC(X)$. Take $P=\Supp D$. Suppose that $P$ has $p$ irreducible components. Fix an ample Cartier divisor $A$ on $X$ with $A-pP_i$ ample for each irreducible component $P_i$ of $P$.

By applying Proposition~\ref{prop_com} to $mD$, 
	\[ h^0(X, \lfloor kmD \rfloor + 2kA) > C_{1} k^n\vol_X(mD + A)
	\]
	for any positive integer $m$ and any integer $k > C_{0}$. We fix such a positive integer $k > C_{0}$, and vary $m$. Then by definition, 
	\begin{align*} 
		\kappa^{\RR}_{\sigma, \sup}(X, kD) \geq \kappa_{\vol, \sup}^\RR(X, D) 
	\end{align*}
	and 
	\begin{align*}
		\kappa^{\RR}_{\sigma, \inf}(X, kD) \geq \kappa_{\vol, \inf}^\RR(X, D). 
	\end{align*}
	 By Lemma~\ref{lem_mul}(3), we have 
	 \begin{align*} 
		\kappa^{\RR}_{\sigma, \sup}(X, D) \geq \kappa_{\vol, \sup}^\RR(X, D) 
	\end{align*}
	and 
	\begin{align*}
		\kappa^{\RR}_{\sigma, \inf}(X, D) \geq \kappa_{\vol, \inf}^\RR(X, D). 
	\end{align*}
	The reverse inequalities are proved by \cite[Proposition 3.1]{CP21}.
	\end{proof}

\section{Numerical dimensions on Calabi--Yau varieties of Picard number $2$}\label{prf1}

In this section we compute the numerical dimensions $\kappa^\RR_{\sigma}$ and $\kappa^\RR_{\vol}$ for the extremal rays of the closed movable cone of a Calabi--Yau variety of Picard number two with infinite birational automorphism group.

\subsection{Calabi--Yau varieties with $\rho=2$}\label{pic2} Let $X$ be a Calabi--Yau variety of Picard number two whose birational automorphism group is infinite. In this subsection, we recall the structure of $\Mov(X)$ following \cite{LP13, Og14, Zh14}. 

In order to apply \cite{Zh14}, we first explain that $\Aut^0(X) = \{1\}$, where $\Aut^0(X)$ is the connected component of identity in $\Aut(X)$. In fact, since $h^1(\cO_X) = 0$, $\Aut^0(X)$ is linear by \cite[Corollary 2.19]{Br18}. 
Recall that $X$ is terminal and $K_X$ is numerically trivial, so $\kappa(X) = 0$ by \cite[Theorem~8.2]{Ka85b} and $X$ is non-uniruled by \cite[Corollary~0.3]{BDPP13}. Suppose that $\dim \Aut^0(X) \geq 1$. Recall that each linear algebraic group of positive dimension contains a subgroup isomorphic to the additive group $\mathbb{G}_a$ or the multiplicative group $\mathbb{G}_m$. For each point of $X$, the closure of the orbit of $\mathbb{G}_a$ or $\mathbb{G}_m$ is a rational curve, which implies that $X$ is a uniruled variety, a contraction. Therefore, $\Aut^0(X) = \{1\}$.

Since $\Bir(X)$ is infinite, the two extremal rays of $\MovC(X)$ are irrational by \cite[Theorem 1.2(5)]{Zh14}. Moreover, by \cite[Theorem 1.3]{Zh14}, there is a rational polyhedral cone $\Sigma$ which is a fundamental domain for the action of $\Bir(X)$ on the effective movable cone $\MovC(X) \cap \Eff(X)$, in the sense that 
\begin{align}\label{eq:ME=Sigma} \MovC(X)\cap \Eff(X) = \bigcup_{g\in \Bir(X)} g^{\ast}\Sigma
\end{align}
and $\Sigma^{\circ} \cap (g^{\ast} \Sigma)^{\circ} = \varnothing$ unless $g^{\ast} = \id$. 
Here any $g\in \Bir(X)$ induces an action $g^*: N^1(X)_\mathbb{R}\to N^1(X)_\mathbb{R}$ as in Section~\ref{notat}, and it preserves the movable cone and the effective movable cone.

Moreover, $\Sigma \subset \Mov(X)^{\circ}$ as $\Sigma$ is rational and extremal rays of $\MovC(X)$ are irrational. So this implies that the 
right hand side of \eqref{eq:ME=Sigma} is contained in $\Mov(X)^{\circ}$, which implies that
$\MovC(X)\cap \Eff(X) \subset \Mov(X)^{\circ}$. On the other hand, $\Mov(X)^{\circ}\subset \BigC(X)\subset \Eff(X)$, so we conclude that \begin{align}\label{eq:ME=Mo}\MovC(X)\cap \Eff(X) = \Mov(X)^{\circ}.\end{align}

\begin{proposition}\label{coverNef} Let $X$ be a Calabi--Yau variety with $\rho(X)=2$ and infinite $\Bir(X)$. Then there exist only finitely many minimal models of $X$ up to isomorphism. 

Moreover, there exists a rational polyhedral cone $\Pi \subset \Mov(X)^{\circ}$ and an element $f \in \Bir(X)$, such that 

\begin{enumerate}
 \item $\Mov(X)^{\circ} = \bigcup_{k \in \ZZ} (f^{k})^{\ast}\Pi$,
 
 \item $f^{\ast}|_{N^1(X)_{\RR}}$ has spectral radius (i.e. the largest absolute value of its eigenvalues) $>1$, 
 \item $f^*$ acts on the extremal rays of $\MovC(X)$ by $\begin{pmatrix}\lambda & 0\\ 0 & \lambda^{-1}\end{pmatrix}$ for some positive real number $\lambda\neq 1$,
 and 
 \item $\Pi \subset \bigcup_{i=1}^r \phi_i^*\Nef(X_i)$, where $\phi_i\colon X\dashrightarrow X_i$ are the finitely many minimal models of $X$. 
\end{enumerate}

\end{proposition}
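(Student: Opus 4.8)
The plan is to bootstrap from the structure of $\MovC(X)$ recalled in Section~\ref{pic2}, using the BDPP-type classification of the $\Bir(X)$-action together with the finiteness of minimal models coming from the cone conjecture in this low Picard rank regime. First I would invoke \eqref{eq:ME=Mo}, which identifies $\Mov(X)^{\circ}$ with $\MovC(X)\cap\Eff(X)$, and the fundamental domain statement \eqref{eq:ME=Sigma}: there is a rational polyhedral cone $\Sigma\subset\Mov(X)^{\circ}$ with $\MovC(X)\cap\Eff(X)=\bigcup_{g\in\Bir(X)}g^{\ast}\Sigma$ and disjoint interiors. Since $\Sigma$ is rational polyhedral and sits inside the (two-dimensional) open movable cone, it is covered by finitely many $\phi_i^{\ast}\Nef(X_i)$ for minimal models $\phi_i\colon X\dashrightarrow X_i$ of $X$: indeed each nef cone of a minimal model is a rational polyhedral subcone of $\MovC(X)$ by the cone theorem for Calabi--Yau minimal models with $\rho=2$ (two extremal rays, each either a genuine nef boundary or traced out by a flop), and finitely many of them already exhaust the compact slice of $\Sigma$. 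This will give assertion (4) once we replace $\Sigma$ by the right cone $\Pi$, and simultaneously the finiteness of minimal models up to isomorphism: every minimal model's nef cone is $g^{\ast}$ of one meeting $\Sigma$, and there are finitely many such chambers in $\Sigma$, while the isomorphism class of a minimal model is determined by its nef chamber inside $\Mov(X)^{\circ}$.

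Next I would produce the element $f$ and the cone $\Pi$. Because the two extremal rays of $\MovC(X)$ are irrational (\cite[Theorem 1.2(5)]{Zh14}) and $\Bir(X)$ acts on this two-dimensional cone preserving the rays but permuting rational interior points, the image of $\Bir(X)$ in $\mathrm{GL}(N^1(X)_{\RR})$ is, up to finite index, infinite cyclic generated by some $f^{\ast}$ which in the basis of eigenvectors along the two extremal rays is diagonal $\mathrm{diag}(\lambda,\lambda^{-1})$ with $\lambda\neq 1$ real; this is exactly (2) and (3), and it follows from the standard dichotomy for subgroups of $\mathrm{GL}_2$ preserving a strongly convex cone with irrational rays (the action on the rays factors through a torsion-free abelian group of rank one after passing to finite index, and a torsion element would have rational eigenvectors). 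Replacing $f$ by a power, I may assume $f^{\ast}$ generates the relevant cyclic group and that $\Sigma\cup f^{\ast}\Sigma\cup\cdots\cup(f^{\ast})^{N-1}\Sigma$ is a connected "strip" across $\Mov(X)^{\circ}$; I then set $\Pi$ to be this finite union (or its rational-polyhedral convex hull inside $\Mov(X)^{\circ}$), which is still rational polyhedral and still contained in $\Mov(X)^{\circ}$ since finitely many $g^{\ast}\Sigma$ stay away from the irrational boundary rays. Translating \eqref{eq:ME=Sigma} by powers of $f^{\ast}$ and using that the finite-index subgroup generated by $f^{\ast}$ already sweeps out everything (the finitely many coset representatives only reshuffle the finitely many subchambers, which I absorb into $\Pi$) yields $\Mov(X)^{\circ}=\bigcup_{k\in\ZZ}(f^{k})^{\ast}\Pi$, which is (1). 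Finally (4) for $\Pi$ follows from (4) for $\Sigma$ together with $\Pi$ being a finite union of $\Bir(X)$-translates of $\Sigma$, each translate's nef-chamber decomposition being the $g^{\ast}$-image of that of $\Sigma$, and all such chambers corresponding to the finitely many minimal models.

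The main obstacle I expect is the bookkeeping in passing from the abstract fundamental domain $\Sigma$ for the full group $\Bir(X)$ to the cleaner statement for a single cyclic generator $f$: one must check that $f^{\ast}$ can be chosen so that its powers (rather than the whole group) tile $\Mov(X)^{\circ}$ by translates of the enlarged rational polyhedral cone $\Pi$, and that $\Pi$ can be kept strictly inside the open movable cone. Concretely this requires knowing that the kernel of $\Bir(X)\to\mathrm{GL}(N^1(X)_{\RR})$, together with the finite torsion part of the image, acts on $\Sigma$ with only finitely many orbits of subchambers — which is true because the torsion part fixes a rational point in the interior and hence is finite by the cone-conjecture finiteness, and the kernel consists of automorphisms in codimension one that permute the finitely many nef chambers. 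Granting that, the geometric input (irrationality of the extremal rays, finiteness of minimal models, the cone theorem of \cite{LP13}) does all the real work, and the rest is the elementary $2\times 2$ linear algebra of a hyperbolic isometry of a cone.
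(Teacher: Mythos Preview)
Your overall architecture matches the paper's: extract the rational polyhedral fundamental domain $\Sigma$ from \cite{Zh14}, exploit that $\Bir(X)$ is virtually infinite cyclic to produce $f$, take $\Pi$ to be the convex hull of finitely many coset-translates of $\Sigma$, and decompose $\Pi$ into finitely many pulled-back nef cones. The paper proceeds in exactly this way, citing \cite[Theorem~1.4(2c), Corollary~1.6]{Zh14} directly for the virtually cyclic structure and spectral radius, and obtaining (3) from $\det(f^{\ast})=\pm 1$.

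There is one genuine gap. You assert that ``finitely many of them already exhaust the compact slice of $\Sigma$'', but knowing that each $\phi_i^{\ast}\Nef(X_i)$ is a rational polyhedral wedge does not by itself preclude infinitely many such wedges accumulating at an interior ray of $\Sigma$. The paper handles this via the \emph{local finiteness} of the decomposition $\Mov(X)^{\circ}=\bigcup_{(X',\phi)}\phi^{\ast}\Nef(X')\cap\BigC(X)$, which it justifies using \cite[Proposition~4.6]{Wa22} together with \cite[Corollary~1.1.5]{BCHM10}: for any interior point $[D]$, the relevant minimal models are among the finitely many log minimal models of $(X,\epsilon D)$ in a neighbourhood. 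Without this input your compactness argument does not close, and the subsequent finiteness of minimal models (which you then want to use) is not yet established. Indeed the paper's logical order is the reverse of yours: it first builds $\Pi$ and $f$, then proves (4) via local finiteness, and \emph{only then} deduces finiteness of minimal models from (1), (4) and \cite[Lemma~1.5]{Ka97}.

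A minor point: your remark that ``a torsion element would have rational eigenvectors'' is true but does not contradict the irrationality of the extremal rays, since a reflection swapping $R_1$ and $R_2$ has rational eigenvectors $R_1\pm\alpha R_2$ without forcing $R_1,R_2$ themselves to be rational. What you actually need for rank one is simply that the subgroup fixing both rays embeds into $\RR_{>0}$ via $\mathrm{diag}(\lambda,\lambda^{-1})\mapsto\lambda$ and is discrete (being in $\mathrm{GL}_2(\ZZ)$); the torsion remark is a red herring.
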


We note that the above rational polyhedral cone $\Pi$ is not necessarily a fundamental domain for the infinite cyclic group generated by $f$. 

\begin{proof} 
(2) By \cite[Theorem 1.4(2c)]{Zh14}, $\Bir(X)$ is almost infinite cyclic, namely, the index $I = |\Bir(X) : H|$ is finite for some infinite cyclic subgroup $H \subset \Bir(X)$. Fix a generator $f \in H$. After replacing $f$ by $f^2$ (and replacing $H$ by the cyclic group generated by $f^2$), we may assume that $f$ preserves the two extremal rays of $\MovC(X)$. As $f$ also preserves $\EffC(X)$, it preserves the two extremal rays of $\EffC(X)$. 
Then $f^{\ast}|_{N^1(X)_{\RR}}$ has spectral radius $>1$ by \cite[Corollary 1.6]{Zh14}.

(3) As $f^*$ is defined over $\mathbb{Z}$, 
$\det(f^{\ast}|_{N^1(X)_{\RR}})=\pm 1$. So $f^*$ acts on the extremal rays of $\MovC(X)$ by $\begin{pmatrix}\lambda & 0\\ 0 & \lambda^{-1}\end{pmatrix}$ for some positive real number $\lambda\neq 1$.
In particular, $\EffC(X)=\MovC(X)$ since $f^{\ast}|_{N^1(X)_{\RR}}$ can only have at most $2$ linearly independent eigenvectors. 

(1) By the proof of (2), we may write $\Bir(X) = \bigcup_{j=1}^I g_j H$, and take $\Pi$ the convex hull of $\bigcup_{j=1}^I g_j^{\ast} \Sigma$, where $\Sigma$ is a rational polyhedral fundamental domain for the action of $\Bir(X)$ on $\Mov(X)^{\circ}$ as in \cite[Theorem 1.3]{Zh14}. Then clearly $\Pi$ is a rational polyhedral cone contained in $\Mov(X)^{\circ}$ because the extremal rays of $\Mov(X)^{\circ}$ are irrational. Hence $\bigcup_{k \in \ZZ} (f^{k})^{\ast}\Pi \subset \Mov(X)^{\circ}$.
On the other hand, by \eqref{eq:ME=Sigma}, 
$\MovC(X)\cap \Eff(X) =\bigcup_{k \in \ZZ} (f^{k})^{\ast}\Pi$. Hence by \eqref{eq:ME=Mo},
$\bigcup_{k \in \ZZ} (f^{k})^{\ast}\Pi = \Mov(X)^{\circ}$.

(4) We claim that the following decomposition 
\begin{align}\label{eq:local decomposition}
 \Mov(X)^{\circ} = \Mov(X) \cap \BigC(X) = \bigcup_{(X', \phi)} \phi^{\ast}\left(\Nef(X')\right) \cap \BigC(X)
\end{align}
holds, where $\phi: X \dashrightarrow X'$ runs over all minimal models of $X$, and this is a locally finite decomposition (cf. \cite[Lemma 2.15]{FHS21}). Here the decomposition is {\it locally finite} in the sense that for any point $[D]$ in the cone, there exists an open cone $U$ containing $[D]$ such that the union in the decomposition is a finite union on $U$ (cf. \cite[Theorem~2.6]{Ka97}).

In fact, by \eqref{eq:ME=Mo} and $\Mov(X)^{\circ}\subset \BigC(X)$, we have the first equality of \eqref{eq:local decomposition}. Then the second equality holds by intersecting $\BigC(X)$ with the first inclusion in 
\cite[Proposition 4.6]{Wa22}.
To show that this decomposition is locally finite, recall that by the proof of \cite[Proposition 4.6]{Wa22}, for any $[D]\in \Mov(X)^{\circ}$, a minimal model $\phi: X \dashrightarrow X'$ such that $[D]\in \phi^{\ast}\left(\Nef(X')\right)$ is a log minimal model of $(X, \epsilon D)$ for some sufficiently small positive real number $\epsilon$. By \cite[Corollary~1.1.5]{BCHM10}, there are only finitely many such log minimal models in a neighborhood $U_0$ of $[D]\in \Mov(X)^{\circ}$. Then we may take the open cone $U$ to be the cone generated by $U_0$ and hence the decomposition is locally finite.

Note that the polyhedral cone $\Pi$ is determined by its affine hyperplane section, which is a compact set. 
Hence by the local finiteness of the decomposition \eqref{eq:local decomposition}, $\Pi$ is covered by a finite union of $\phi_i^{\ast}\Nef(X_i)$ for finitely many minimal models $X_i$ with $\phi_i: X\dashrightarrow X_i$ ($1\leq i\leq r$). 

Finally, in order to show the finiteness of minimal models, it is enough to show that each minimal model $X'$ with $\phi: X \dashrightarrow X'$ is isomorphic to some $X_i$.
By (1), $(f^{k})^{\ast}\Pi \cap \phi^{\ast}\Amp(X') \neq \varnothing$ for some $k \in \ZZ$. 
Thus, $(f^k)^*\phi_i^{\ast}\Amp(X_i)\cap \phi^{\ast}\Amp(X') \neq \varnothing$ for some $i$. By \cite[Lemma 1.5]{Ka97}, $X'$ and $X_i$ are isomorphic. This completes the proof. 
\end{proof}

\subsection{Computation of numerical dimensions on Calabi--Yau varieties}

Now we are prepared to compute the numerical dimensions $\kappa^\RR_{\sigma}$ and $\kappa^\RR_{\vol}$ for the extremal rays of the closed movable cone of a Calabi--Yau variety of Picard number two with infinite birational automorphism group. 
In fact, we only need to compute $\kappa^\RR_{\vol}$ by estimating the volume of divisors that are close to the boundary of the closed movable cone using a new set of coordinates on the movable cone defined in \cite{Le22}. We will adopt the following setting in this section.

\begin{setting}\label{setting 4.1}
Let $X$ be a Calabi--Yau variety of dimension $n$ with $\rho(X)=2$ and infinite $\Bir(X)$. Fix $\RR$-divisors $R_1$ and $R_2$ which are generators of the extremal rays of $\MovC(X)$, that is,
\[ \MovC(X) = \RR_{\geq 0} [R_1] + \RR_{\geq 0} [R_2].
\]
Following \cite{Le22}, for an $\RR$-divisor $D\equiv a_1R_1+a_2R_2$ in $\text{Mov}(X)^{\circ}$,
we introduce the new set of coordinates 
\[ L_1(D) = a_1a_2 \ \ \text{and} \ \ L_2(D) = \frac{a_1}{a_2}. 
\] 
\end{setting}

Note that if $\phi: X\dashrightarrow X'$ is a minimal model of $X$, then $L_i$ are well-defined and preserved on $\text{Mov}(X')^{\circ}$ via the isomorphism $\phi_*: N^1(X)_\RR\to N^1(X')_\RR$.
Take $f \in \Bir(X)$ as in Proposition~\ref{coverNef}.
With respect to the basis $\{R_1, R_2\}$, $f$ acts on $N^1(X)_\RR$ by the matrix 
$\begin{pmatrix}
	\lambda & 0 \\ 0 & \lambda^{-1}
\end{pmatrix}, $
and we have
\[ L_1(f^{\ast} D) = L_1(D) \ \ \text{and} \ \ L_2(f^{\ast} D) = \lambda^2L_2(D). 
\]
This implies that $L_1(-)$ is a quadratic form invariant under $f^*$. Let us also notice an obvious but important fact that $L_2(-)$ describes the slope of a ray in the open cone $\Mov(X)^{\circ}$.

\begin{lemma}\label{ineqL1h0} 
Keep the notation in Setting~\ref{setting 4.1}. Then there exist positive constants $C_{11}$, $C_{21} > 0$, such that for any $\mathbb{R}$-divisor class $[D]\in \Mov(X)^{\circ}$, we have 
\[ C_{11} L_1(D)^{n/2} < \vol_X(D) < C_{21} L_1(D)^{n/2}. 
\] 
\end{lemma}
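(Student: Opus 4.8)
The plan is to exploit the locally finite chamber decomposition $\Pi \subset \bigcup_{i=1}^r \phi_i^*\Nef(X_i)$ from Proposition~\ref{coverNef}(4) together with the $f$-translation structure $\Mov(X)^{\circ} = \bigcup_{k\in\ZZ}(f^k)^*\Pi$, and to push all volume computations to the nef chambers where volumes are just top self-intersection numbers. First I would fix the finitely many minimal models $\phi_i\colon X\dashrightarrow X_i$ covering $\Pi$ as in Proposition~\ref{coverNef}(4). On each closed cone $\phi_i^*\Nef(X_i)$ the function $[D]\mapsto \vol_X(D) = ((\phi_i)_*D)^n$ is (the restriction of) a homogeneous degree-$n$ polynomial, while $L_1(D) = a_1a_2$ is a homogeneous degree-$2$ polynomial; both are continuous and strictly positive on the compact slice of $\Pi$ that does not meet the (irrational) extremal rays. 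Hence on $\Pi$ one gets $C_{11}'L_1(D)^{n/2} < \vol_X(D) < C_{21}'L_1(D)^{n/2}$ for suitable constants, simply by compactness and positivity of the ratio $\vol_X(D)/L_1(D)^{n/2}$ — here one must check $\vol_X > 0$ on all of $\Pi$, which holds because $\Pi\subset\Mov(X)^{\circ}\subset\BigC(X)$, and also that $L_1 > 0$ on $\Pi$ since every class there has strictly positive coordinates $a_1, a_2$.

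Next I would propagate the inequality from $\Pi$ to all of $\Mov(X)^{\circ}$ using invariance. Given $[D]\in\Mov(X)^{\circ}$, write $[D] = (f^k)^*[D']$ with $[D']\in\Pi$ for some $k\in\ZZ$. Since $f^*$ acts on $N^1(X)_\RR$ by $\begin{pmatrix}\lambda & 0\\ 0 & \lambda^{-1}\end{pmatrix}$ in the basis $\{R_1,R_2\}$, we have $L_1(f^*D') = L_1(D')$, so $L_1$ is constant along $f$-orbits. On the other hand $\vol_X$ is a birational invariant under small maps and $f$ is small (being an element of $\Bir(X)$ between Calabi--Yau varieties, realized as a composition of flops), so $\vol_X((f^k)^*D') = \vol_X(D')$ as well — more carefully, $(f^k)^*$ sends $\Nef$-chambers to $\Nef$-chambers of the (isomorphic) minimal models and preserves the relevant intersection numbers. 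Therefore $\vol_X(D)/L_1(D)^{n/2} = \vol_X(D')/L_1(D')^{n/2}$, which lies in $(C_{11}', C_{21}')$. Taking $C_{11} = C_{11}'$ and $C_{21} = C_{21}'$ (or slightly shrunk/enlarged to make the inequalities strict) completes the argument.

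The main obstacle I anticipate is the rigorous bookkeeping in the translation step: one needs that $\vol_X$ is genuinely $f^*$-invariant as a function on $N^1(X)_\RR$ (not merely that individual divisor classes have matching volumes), which requires knowing that $f$ induces a pseudo-automorphism (an isomorphism in codimension one) so that $f^*$ matches intersection numbers on the big cone, and that its action restricted to each nef subchamber is compatible with the identification $\phi_i^*$ of Proposition~\ref{coverNef}. A secondary technical point is ensuring the constants from the finitely many chambers $\phi_i^*\Nef(X_i)$ can be amalgamated: one takes the minimum (resp.\ maximum) of finitely many positive infima (resp.\ suprema) over the compact slices $\Pi\cap\{a_1+a_2=1\}\cap\phi_i^*\Nef(X_i)$, and one must confirm these slices are indeed compact and bounded away from the extremal rays, which follows because $\Pi$ is a rational polyhedral cone strictly inside $\Mov(X)^{\circ}$ while the extremal rays of $\MovC(X)$ are irrational.
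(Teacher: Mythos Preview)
Your proposal is correct and follows essentially the same strategy as the paper: restrict to the rational polyhedral cone $\Pi$, use that on each chamber $\phi_i^*\Nef(X_i)\cap\Pi$ the ratio $\vol_X(D)/L_1(D)^{n/2}$ is a positive continuous function of the ray (the paper writes this ratio explicitly as $\lambda_i(D_i)$, a Laurent polynomial in $L_2(D_i)$, whereas you invoke homogeneity and compactness of a slice directly), bound it above and below on each of the finitely many chambers, and then propagate to all of $\Mov(X)^{\circ}$ via the $f^*$-invariance of both $L_1$ and $\vol_X$. The only cosmetic difference is that the paper's explicit binomial expansion makes the dependence on $L_2$ alone visible, while your compactness-of-the-slice argument achieves the same thing abstractly.
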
	

\begin{proof} Take $\Pi\subset \Mov(X)^{\circ}$ and $f\in \Bir(X)$ as in Proposition~\ref{coverNef}. 
Fix a minimal model $\phi_i: X\dashrightarrow X_i$ as in Proposition~\ref{coverNef}(4). Fix an $\RR$-divisor class $[D_i]\in \phi_i^*\Nef(X_i)\cap \Pi$. Write $D_i \equiv a_1 R_{1} + a_2 R_ {2}$ with $a_1, a_2 > 0$. Denote by $R_{1, i}, R_{2, i}$ the strict transform of $R_1, R_2$ in $N^1(X_i)_\RR$. Then $\phi_{i*}D_i \equiv a_1 R_{1, i} + a_2 R_ {2, i}$ is nef and big on $X_i$.
We have 
\begin{align*}
	\vol_X(D_i)=(\phi_{i*}D_i)^n &= (a_1 R_{1, i} + a_2 R_{2, i})^n = \sum^n_{j=0} \binom{n}{j} a_1^j a_2^{n-j} R_{1, i}^j R_{2,i}^{n-j} \\
	&= L_1(D_i)^{n/2} \left( \sum^n_{j=0} \binom{n}{j} L_2(D_i)^{j-n/2} R_{1, i}^j R_{2, i}^{n-j} \right).
\end{align*}
Set
\[ \lambda_i(D_i) = \sum^n_{j=0} \binom{n}{j} L_2(D_i)^{j-n/2} R_{1, i}^j R_{2, i}^{n-j}. 
\]
Then
\[ \vol_X(D_i) = L_1(D_i)^{n/2} \lambda_i(D_i). 
\]
Note that $\vol_X(D_i) > 0$ and $L_1(D_i) > 0$, so $\lambda_i(D_i)$ is a positive continuous function in terms of $L_2(D_i)$ for $[D_i]\in \phi_i^*\Nef(X_i)\cap \Pi$. 
On the other hand, 
denote by $H_{1,i}$ and $H_{2,i}$ the generators of extremal rays of $\phi_i^*\Nef(X_i)\cap \Pi$. Then as $\Pi\subset \Mov(X)^\circ$, for any $[D_i]\in \phi_i^*\Nef(X_i)\cap \Pi$, possibly switching $H_{1,i}$ and $H_{2,i}$, we have
\begin{align*}
 0 < L_2(H_{1,i}) \leq L_2(D_i) \leq L_2(H_{2,i}) < +\infty.
\end{align*} 
So there exist positive constants $C_{11, i}$, $C_{21, i} > 0$, such that 
\begin{align}
 C_{11, i} < \lambda_i(D_i)=\frac{\vol_X(D_i)}{ L_1(D_i)^{n/2}} < C_{21, i} \label{eq lambdai bdd}
\end{align} 
for any $[D_i]\in \phi_i^*\Nef(X_i)\cap \Pi$.

Now for any $[D]\in \Mov(X)^\circ$,
by Proposition~\ref{coverNef}(1)(4), there exists $k \in \ZZ$ and a minimal model $\phi_i: X\dashrightarrow X_i$ such that $D\equiv (f^{k})^{\ast} D_i$ for some $\mathbb{R}$-divisor class $[D_i] \in \phi_i^*\Nef(X_i)\cap \Pi $. Note that $L_1(D) = L_1(D_i)$ and $\vol_X(D) = \vol_{X}(D_i)$.
So by \eqref{eq lambdai bdd}, 
\begin{align*}
 C_{11, i} < \frac{\vol_X(D)}{ L_1(D)^{n/2}} < C_{21, i}.
\end{align*} 
So we can find desired positive constants $C_{11} = \min\{ C_{11, i} \}$ and $C_{21} = \max\{ C_{21, i} \}$. 
\end{proof}

\begin{remark} The constants $C_{11}$ and $C_{21}$ here depend also on the ``position'' of $\Pi$ in $\Mov(X)^{\circ}$. The argument is unavailable if we only assume the finiteness of minimal models. 
\end{remark}

\begin{lemma}[{\cite[Lemma 7]{Le22}}]\label{rounddown} Keep the notation in Setting~\ref{setting 4.1}. Then there exist positive constants $C_{12}$, $C_{22}$, $C_{2} > 0$, such that for any non-zero $\RR$-divisor $D = a_1 R_1 + a_2 R_2$ with $a_1, a_2\geq 0$ and any ample Cartier divisor $A \equiv b_1 R_1 + b_2 R_2$ with $b_1$, $b_2 > C_2$, we have
	\begin{enumerate}
	 \item the class of $\lfloor D \rfloor + A $ lies in $ \Mov(X)^{\circ}$, and 
	 
	 \item $C_{12}L_1(D+A) < L_1(\lfloor D\rfloor +A) < C_{22}L_1(D + A)$. 
	\end{enumerate}
\end{lemma}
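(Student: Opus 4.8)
The plan is to prove Lemma~\ref{rounddown} by reducing the statement about roundings of $\RR$-divisors to a uniform comparison in the new coordinates $L_1, L_2$, exactly in the spirit of the earlier Lemmas~\ref{lem_rdvol} and~\ref{prop_com}, but now working explicitly inside the $2$-dimensional cone $N^1(X)_\RR$ spanned by $R_1, R_2$. First I would fix the reduced divisor $P = \Supp R_1 + \Supp R_2$ (the union of all components that can appear), say with $p$ irreducible components, and choose the threshold $C_2$ large enough that any ample $A \equiv b_1 R_1 + b_2 R_2$ with $b_1, b_2 > C_2$ dominates $pP_i$ for every component $P_i$; this is possible because ampleness in $N^1(X)_\RR$ is an open cone condition and being ``$\geq$ a fixed multiple of each $P_i$'' is a finite list of closed conditions that are implied once $b_1, b_2$ are large. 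With such $A$, part~(1) follows immediately from Lemma~\ref{lem_rdvol}(1): writing $\{D\} \leq P$ automatically, the class $\lfloor D \rfloor + A = D + A - \{D\}$ differs from the big, movable-interior class $D + A$ by the ample class $A - \{D\}$, hence lies in $\Mov(X)^\circ$.

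For part~(2), the key point is that $\lfloor D \rfloor + A$ and $D + A$ differ by the bounded $\RR$-divisor $\{D\}$, whose coefficients lie in $[0,1)$, so in the basis $\{R_1, R_2\}$ we have $\lfloor D \rfloor + A \equiv a_1' R_1 + a_2' R_2$ and $D + A \equiv (a_1 + b_1) R_1 + (a_2 + b_2) R_2$ with $|a_i' - (a_i + b_i)|$ bounded by a constant depending only on $P$ and the (fixed) expression of each $P_i$ in terms of $R_1, R_2$. Since $b_1, b_2 > C_2$ and $a_1, a_2 \geq 0$, the quantities $a_i + b_i$ are bounded below by $C_2$, so the ratios $a_i'/(a_i + b_i)$ lie in a fixed compact subinterval of $(0, \infty)$ once $C_2$ is chosen large relative to that bound. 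Then $L_1(\lfloor D \rfloor + A) = a_1' a_2'$ and $L_1(D + A) = (a_1 + b_1)(a_2 + b_2)$ are comparable up to the product of those two ratios, giving constants $C_{12}, C_{22}$ depending only on $X$, $R_1$, $R_2$, and $P$ — in particular independent of $D$ and $A$. (One must take a little care that $a_1', a_2' > 0$, which is exactly part~(1): $\lfloor D \rfloor + A \in \Mov(X)^\circ$ forces both coordinates positive since $\Mov(X)^\circ$ is the open cone strictly between the rays $\RR_{\geq 0} R_1$ and $\RR_{\geq 0} R_2$.)

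The main obstacle — really the only subtlety — is bookkeeping the constant bounding $\{D\}$ in the $R_1, R_2$-coordinates: $\{D\}$ is an effective $\RR$-divisor with coefficients in $[0,1)$ supported on at most $p$ prime components, but each such component need not itself be movable, so writing $\{D\} \equiv c_1 R_1 + c_2 R_2$ could a priori give $c_i$ of either sign and of size comparable to the largest ``$R_j$-coordinate of a prime divisor in $P$''. This is harmless because $P$ is fixed once $D$'s support is absorbed into it, so there is a single constant $M = M(X, R_1, R_2, P)$ with $|c_i| \leq M$ for every admissible $D$; choosing $C_2 > 10M$ (say) then makes all the ratio estimates above uniform. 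I would also remark, as the cited paper does, that this is just \cite[Lemma 7]{Le22} transcribed to the present setup, so the proof can be kept brief: invoke Lemma~\ref{lem_rdvol} for~(1), and for~(2) expand in the $R_1, R_2$-basis and bound the perturbation as described.
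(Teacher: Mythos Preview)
Your second and third paragraphs are the whole proof, and they match the paper's argument exactly: write $\lfloor D\rfloor \equiv \tilde a_1 R_1+\tilde a_2 R_2$, observe that $\{D\}$ is supported on the fixed set $\Supp R_1\cup\Supp R_2=\bigcup P_i$ with coefficients in $[0,1)$, so $|\tilde a_i-a_i|<c:=\sum_i|p_i|+\sum_i|q_i|$ where $P_i\equiv p_iR_1+q_iR_2$; then take $C_2=1+c$, deduce $\tilde a_i+b_i>a_i+b_i-c>1>0$ for (1), and bound each ratio $(\tilde a_i+b_i)/(a_i+b_i)$ between $C_2^{-1}$ and $C_2$ for (2). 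The paper even records the explicit constants $C_{12}=C_2^{-2}$, $C_{22}=C_2^2$.

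Your first paragraph, however, is a detour that does not work as written. Lemma~\ref{lem_rdvol} requires $A-pP_i$ to be \emph{ample}, and you cannot arrange this for \emph{all} ample $A$ with $b_1,b_2>C_2$ just by making $C_2$ large: the nef cone $\Nef(X)$ is in general strictly smaller than $\MovC(X)$, so an ample class can have $b_1,b_2$ arbitrarily large while sitting arbitrarily close to $\partial\Amp(X)$, and subtracting the fixed vector $pP_i$ then pushes it out of $\Amp(X)$. (Concretely, write everything in a basis of nef generators $H_1,H_2$; the condition $b_1,b_2>C_2$ does not force both $H_j$-coordinates of $A$ to be large.) Since you already derive (1) directly from the positivity of $\tilde a_i+b_i$ at the end, simply drop the appeal to Lemma~\ref{lem_rdvol} and lead with the coordinate computation, as the paper does.
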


\begin{proof} Suppose that $\floor{D} \equiv \widetilde{a}_1 R_1 + \widetilde{a}_2 R_2$. 
As the support of $D-\floor{D}$ is contained in $\Supp(R_1)\cup \Supp(R_2)$ with coefficients less than one, there is a constant $c$ independent of $D$, such that $\abs{a_i - \widetilde{a}_i } < c$. 

In fact, write $\Supp(R_1)\cup \Supp(R_2)=\cup_{i}P_i$, where each $P_i$ is a prime divisor, and write $P_i\equiv p_i R_1 + q_i R_2$.
Then \[(a_1-\widetilde{a}_1) R_1 + (a_2-\widetilde{a}_2) R_2\equiv D-\floor{D}=\sum_i e_iP_i\equiv \sum_i e_i(p_i R_1 + q_i R_2) \] for some $0\leq e_i<1$. So we may just take $c= \sum_i |p_i|+ \sum_i |q_i|$.


From 
\[ D + A \equiv (a_1 + b_1) R_1 + (a_2 + b_2) R_2 
\] 
and 
\[ \floor{D} + A \equiv (\widetilde{a}_1 + b_1) R_1 + (\widetilde{a}_2 + b_2) R_2, 
\]
it is clear that
\[ \frac{L_1(\floor{D}+A)}{L_1(D+A)} = \frac{(\widetilde{a}_1+b_1)(\widetilde{a}_2+b_2)}{(a_1+b_1)(a_2+b_2)} =
\frac{\widetilde{a}_1+b_1}{a_1+b_1} \cdot \frac{\widetilde{a}_2+b_2}{a_2+b_2}.
\]
Take $C_2 = 1 + c > 1$. For $i=1, 2$, if $b_i > C_2$, then clearly $a_i + b_i > 1$, and 
\[ \widetilde{a}_i + b_i > a_i + b_i - c > 1. 
\] 
Hence $[\lfloor D\rfloor +A] \in \Mov(X)^{\circ}$. Moreover, 
\[ \frac{\widetilde{a}_i+b_i}{a_i+b_i} = 1+ \frac{\widetilde{a}_i-a_i}{a_i+b_i} \leq 1+ \frac{|\widetilde{a}_i-a_i|}{a_i+b_i} < C_2. 
\]
Similarly, 
\[ \frac{a_i+b_i}{\widetilde{a}_i+b_i} < C_2. 
\]
Therefore, 
\[ \frac{1}{C_2^2} < \frac{\widetilde{a}_1+b_1}{a_1+b_1} \cdot \frac{\widetilde{a}_2+b_2}{a_2+b_2} < C_2^2, 
\]
and we can take $C_{12} = C_2^{-2}$ and $C_{22} = C_2^2$. 
\end{proof}

\begin{proposition}\label{semi_final} Keep the notation in Setting~\ref{setting 4.1}. Then there exist positive constants $C_{13}$, $C_{23} > 0$ and an ample Cartier divisor $A$ on $X$, such that for any sufficiently large integer $m$, we have 
\[ C_{13} m^{n/2} < \vol_X(mR_i + A) < C_{23} m^{n/2}
\]
for $i=1,2$.
\end{proposition}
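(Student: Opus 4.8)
The plan is to obtain both inequalities directly from Lemma~\ref{ineqL1h0}, so that the only remaining work is bookkeeping with the coordinates of $[mR_i+A]$ in the basis $\{R_1,R_2\}$.

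First I would fix \emph{any} ample Cartier divisor $A$ on $X$ and write $A\equiv b_1R_1+b_2R_2$ in $N^1(X)_\RR$. Since $A$ is ample it is big, so $[A]\in\BigC(X)$; combining $\BigC(X)=\MovC(X)^\circ=\Mov(X)^\circ$ (using $\EffC(X)=\MovC(X)$ from Proposition~\ref{coverNef}(3)) with the fact that $\Mov(X)^\circ$ is the open cone $\{c_1R_1+c_2R_2:c_1,c_2>0\}$, this forces $b_1,b_2>0$. Then for every positive integer $m$ and every $i\in\{1,2\}$ the class $[mR_i+A]$ equals $(m+b_1)R_1+b_2R_2$ when $i=1$ and $b_1R_1+(m+b_2)R_2$ when $i=2$; in each case both coordinates are strictly positive, so $[mR_i+A]\in\Mov(X)^\circ$, and
\[
L_1(mR_1+A)=(m+b_1)b_2,\qquad L_1(mR_2+A)=b_1(m+b_2),
\]
each affine-linear in $m$ with positive leading coefficient. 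Hence for $m\geq\max\{b_1,b_2\}$ one has the crude bounds $\min\{b_1,b_2\}\,m\leq L_1(mR_i+A)\leq 2\max\{b_1,b_2\}\,m$.

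Finally I would feed these into Lemma~\ref{ineqL1h0} applied to $[mR_i+A]\in\Mov(X)^\circ$, obtaining
\[
C_{11}(\min\{b_1,b_2\})^{n/2}\,m^{n/2}<\vol_X(mR_i+A)<C_{21}(2\max\{b_1,b_2\})^{n/2}\,m^{n/2}
\]
for all $m\geq\max\{b_1,b_2\}$ and $i=1,2$, so one may take $C_{13}=C_{11}(\min\{b_1,b_2\})^{n/2}$ and $C_{23}=C_{21}(2\max\{b_1,b_2\})^{n/2}$. I do not expect any real obstacle: all the geometric content — the decomposition of $\Mov(X)^\circ$ from Proposition~\ref{coverNef} and the comparison of $\vol_X$ with the $f^*$-invariant quadratic form $L_1$ — is already packaged into Lemma~\ref{ineqL1h0}. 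The only points needing a little care are the positivity $b_1,b_2>0$ (which uses $\EffC(X)=\MovC(X)$) and producing constants uniform in $i$, which the $\min/\max$ handle. Note that Lemma~\ref{rounddown} is not needed for this volume estimate; it enters only afterwards, when one passes from volumes to $h^0$ of round-downs.
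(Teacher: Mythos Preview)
Your proof is correct and in fact more direct than the paper's. Both arguments rest on the same key input, Lemma~\ref{ineqL1h0}, which controls $\vol_X(D)$ by $L_1(D)^{n/2}$ for any $\RR$-divisor class $[D]\in\Mov(X)^\circ$. You apply this lemma straight to the $\RR$-divisor class $[mR_i+A]$ after observing that its coordinates $(m+b_1,b_2)$ (resp.\ $(b_1,m+b_2)$) are both positive, so the volume is immediately sandwiched by multiples of $m^{n/2}$. The paper instead takes a detour through the round-down: it invokes Lemma~\ref{rounddown} to compare $L_1(\lfloor mR_1\rfloor+A)$ with $L_1(mR_1+A)$, sandwiches $\vol_X(mR_1+A)$ between $\vol_X(\lfloor mR_1\rfloor+A)$ and $\vol_X(\lfloor mR_1\rfloor+2A)$, and only then applies Lemma~\ref{ineqL1h0} to the integral divisors $\lfloor mR_1\rfloor+A$ and $\lfloor mR_1\rfloor+2A$. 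Since Lemma~\ref{ineqL1h0} is stated for arbitrary $\RR$-divisor classes in $\Mov(X)^\circ$, this detour is unnecessary for the volume estimate, exactly as you observe in your final paragraph; the round-down lemma is only genuinely needed later when one passes from volumes to $h^0$ of integral divisors. Your justification of $b_1,b_2>0$ via $\EffC(X)=\MovC(X)$ is fine, though one could also argue more simply that $\Amp(X)$ is open and contained in $\MovC(X)$.
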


\begin{proof} 
It suffices to show the statement for $R_1$.
By Lemma~\ref{rounddown}, we can find positive constants $C_{12}$, $C_{22}$, $C_{2} > 0$, such that for any positive integer $m$, and any ample Cartier divisor $A \equiv b_1 R_1 + b_2 R_2$ with $b_1, b_2 > C_{2}$, we have 
\begin{align*}
C_{12}L_1(mR_1 + A) < L_1(\lfloor mR_1\rfloor + A) < C_{22}L_1(mR_1 + A). 
\end{align*}
We fix such an ample Cartier divisor $A$ on $X$ satisfying an additional property that $\lfloor mR_1\rfloor + A - mR_1$ is ample for all positive integers $m$. Note also that $mR_1 - \lfloor mR_1\rfloor\geq 0$. Thus, by \cite[Example 2.2.48]{La041}, 
\begin{align*} \vol_X(\lfloor mR_1\rfloor + A) \leq \vol_X(mR_1 + A) \leq \vol_X(\lfloor mR_1\rfloor + 2A). 
\end{align*}
It is easy to compute 
\begin{align*}
L_1(mR_1 + A) = (m + b_1)b_2 \ \ \text{and} \ \ L_1(mR_1 + 2A) = 2(m + 2b_1)b_2.
\end{align*}
Thus, combining with Lemma~\ref{ineqL1h0} for $D = \lfloor mR_1 \rfloor + A$ and $\lfloor mR_1 \rfloor + 2A$, we obtain that for any positive integer $m$, 
\[ C_{11} ( C_{12}(m + b_1)b_2 )^{n/2} < \vol_X(mR_1 + A) < C_{21} ( 2C_{22}(m + 2b_1)b_2 )^{n/2}. 
\]
Therefore, the proposition follows for any sufficiently large $m$. 
\end{proof}

\begin{proof}[Proof of Theorem~\ref{main1}] We may assume that $D=R_1$ or $R_2$. The equalities $$\kappa^{\RR}_{\vol, \sup}(X, D) = \kappa^{\RR}_{\vol, \inf}(X, D) = \frac{\dim X}{2}$$ follow immediately from Proposition~\ref{semi_final}. Combining with Theorem~\ref{com}, Proposition~\ref{coverNef}, and Lemma~\ref{lem finite N}, we complete the proof. 
\end{proof}


\section{Numerical dimensions on projective hyperk\"{a}hler manifolds}\label{prf2}
In this section we study numerical dimensions on projective hyperk\"{a}hler manifolds and give the proof of Theorem~\ref{main2}.

\subsection{Hyperk\"{a}hler manifolds} In this subsection, we recall some facts about hyperk\"{a}hler manifolds, which are also known as irreducible holomorphic symplectic manifolds. We refer to \cite{Huy99} or \cite{GHJ03} for basic properties. Let $X$ be a \textit{projective hyperk\"{a}hler manifold}, that is, a simply connected smooth projective variety such that $H^0(X, \Omega^2_X)$ is spanned by an everywhere non-degenerate $2$-form $\sigma$. It is clear that the dimension of $X$ is always even, say $\dim X = 2d$. There exists a quadratic form $q_X : H^2(X, \RR) \to \RR$ and a constant $c_X \in \QQ_{>0}$, such that for all $\alpha \in H^2(X, \RR)$, we have 
\[ \alpha^{2d} = c_X q_X(\alpha)^d. 
\] 
The above equation determines $c_X$ and $q_X$ uniquely if we assume that $q_X$ is a primitive integral quadratic form on $H^2(X, \ZZ)$ and $q_X(\sigma + \overline{\sigma}) > 0$. Here $q_X$ and $c_X$ are called the \textit{Beauville--Bogomolov--Fujiki form} and the \textit{Fujiki constant} of $X$ respectively (see \cite[Section 23.4]{GHJ03}). We will also use $q_X(-, -)$ to denote the bilinear form associated with this quadratic form. Usually we will not distinguish between $\mathbb{R}$-divisor classes and their first chern classes.


A birational map between two projective hyperk\"{a}hler manifolds is an isomorphism in codimension one, and preserves the Beauville--Bogomolov--Fujiki forms by \cite[Lemma~2.6]{Huy99}. 
Note that two birationally equivalent hyperk\"{a}hler manifolds are deformation equivalent by \cite[Theorem~4.6]{Huy99}. In particular, they admit the same Fujiki constant. 


Let $\Pos(X)$ be the \textit{positive cone} of $X$, that is, the connected component of $$\{ \alpha \in H^{1,1}(X, \RR) \mid q_X(\alpha) > 0 \}$$ containing K\"{a}hler classes. Then $\MovC(X)$ is contained in $\PosC(X)$, the closure of $\Pos(X)$, by \cite[Theorem 7]{HT09}. Moreover, for each $\RR$-divisor class $[D]\in \Mov(X)^{\circ}$, there exists a small birational map $f: X \dashrightarrow X'$, where $X'$ is a projective hyperk\"{a}hler manifold, such that $f_{\ast}D$ is nef and big on $X'$ by \cite[ Proposition 17]{HT09}.

 \subsection{Computation of numerical dimensions on hyperk\"{a}hler manifolds} We study the numerical dimensions of $\RR$-divisors on projective hyperk\"{a}hler manifolds by using Beauville--Bogomolov--Fujiki forms.

\begin{lemma}\label{lem qD}
Let $X$ be a projective hyperk\"{a}hler manifold of dimension $2d$ and fix an $\mathbb{R}$-divisor class $[D]\in \MovC(X)$. 
\begin{enumerate}

\item Then $q_X(D)\geq 0$.

\item If $D$ is not big, then $q_X(D)=0$.

\item If $[D]\in \Mov(X)^{\circ}$, then $\vol_X(D)=c_Xq_X(D)^d$. 

\item If $q_X(D, A)=0$ for some ample Cartier divisor $A$ on $X$, then $D\equiv 0$.
\end{enumerate}

\begin{proof}
(1) This directly follows from $\MovC(X)\subset \PosC(X)$ by \cite[Theorem 7]{HT09}.

(2) By \cite[Corollary~3.10]{Huy99}, if $q_X(D)>0$, then $D$ is big. 

(3) Since $[D] \in \Mov(X)^{\circ}$, by \cite[Proposition 17]{HT09}, there exists a small birational map $f: X \dashrightarrow X'$, where $X'$ is a projective hyperk\"{a}hler manifold, such that $D'=f_{\ast}D$ is nef and big on $X'$. Then 
$$
\vol_X(D)=(D'^{2d})=c_{X'}q_{X'}(D')^d=c_{X}q_{X}(D)^d.
$$

(4) This follows from (1) and the Hodge index theorem of $q_X$ by \cite[1.10]{Huy99}.
\end{proof}

\end{lemma}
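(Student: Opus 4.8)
The plan is to establish each of the four assertions by reducing to known structural results on hyperk\"ahler manifolds, so the argument is mainly a matter of assembling citations in the right order. For (1), I would invoke the inclusion $\MovC(X) \subset \PosC(X)$, which is one half of the Hassett--Tschinkel description of the movable cone [HT09, Theorem 7]; since $\PosC(X)$ is by definition the closure of a connected component of $\{q_X > 0\}$, every class it contains has non-negative Beauville--Bogomolov--Fujiki square, giving $q_X(D) \geq 0$ at once. For (2), I would argue by contraposition: the key input is that on a hyperk\"ahler manifold a class of strictly positive square lying in $\PosC(X)$ is big — this is [Huy99, Corollary 3.10], whose proof rests on the Fujiki relation $\alpha^{2d} = c_X q_X(\alpha)^d$ together with an asymptotic Riemann--Roch estimate. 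Hence if $D$ is not big, then $q_X(D)$ cannot be positive, and combined with (1) this forces $q_X(D) = 0$.

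For (3), the strategy is to move $D$ to a birational model on which it becomes nef. Since $[D] \in \Mov(X)^{\circ}$, by [HT09, Proposition 17] there is a small birational map $f \colon X \dashrightarrow X'$ with $X'$ again a projective hyperk\"ahler manifold and $D' := f_*D$ nef and big. Volumes and spaces of global sections are unchanged under small birational maps, so $\vol_X(D) = \vol_{X'}(D') = (D'^{2d})$, the last equality being the standard fact that the volume of a nef and big divisor equals its top self-intersection. Applying the Fujiki relation on $X'$ rewrites this as $c_{X'}q_{X'}(D')^d$, and finally I would use that both the Fujiki constant and the BBF form are birational invariants of hyperk\"ahler manifolds — for the form this is [Huy99, Lemma 2.6], and for the constant it follows from the fact that birationally equivalent hyperk\"ahler manifolds are deformation equivalent [Huy99, Theorem 4.6], hence carry the same Fujiki constant — to conclude $\vol_X(D) = c_X q_X(D)^d$.

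For (4), I would apply the Hodge index theorem for $q_X$, which says that $q_X$ has signature $(1, \rho(X)-1)$ on the $(1,1)$-classes [Huy99, 1.10]. An ample class $A$ has $q_X(A) > 0$, so its $q_X$-orthogonal complement is negative definite; the hypothesis $q_X(D,A) = 0$ places $D$ in this complement, while $q_X(D) \geq 0$ by (1), and therefore $D \equiv 0$. I expect the only point requiring genuine care is part (3): one must check that the small modification really preserves the volume and, more subtly, that the Fujiki constant is invariant under birational equivalence rather than merely under deformation — which is exactly why the deformation-equivalence statement is needed. Otherwise none of the four parts presents a real obstacle; the difficulty is entirely in correctly citing and combining the known results.
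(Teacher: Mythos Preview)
Your proposal is correct and follows essentially the same route as the paper's proof: each part is handled by the same citation (HT09 Theorem~7 for (1), Huy99 Corollary~3.10 for (2), HT09 Proposition~17 plus birational invariance of $q_X$ and $c_X$ for (3), and the Hodge index theorem from Huy99 1.10 combined with (1) for (4)). Your write-up simply unpacks a bit more detail, particularly in (3) and (4), than the paper does.
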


\begin{proof}[Proof of Theorem~\ref{main2}]
For any positive integer $m$, $[mD+A]\in \Mov(X)^\circ$. So by Lemma~\ref{lem qD}(2)(3),
\begin{align*}
 \vol_X(mD+A)=c_Xq_X(mD+A)^d=c_X(2mq_X(D, A)+q_X(A))^d.
\end{align*}
By Lemma~\ref{lem qD}(4), $q_X(D, A)\neq 0$, so by definition,
$$
\kappa^\mathbb{R}_{\vol, \sup}(X, D)=\kappa^\mathbb{R}_{\vol, \inf}(X, D)=d=\frac{\dim X}{2}.
$$
By Theorem~\ref{com} and Lemma~\ref{lem finite N},
\[
\kappa^\mathbb{R}_{\sigma, \sup}(X, D)=\kappa^\mathbb{R}_{\sigma, \inf}(X, D)=\frac{\dim X}{2}.\qedhere
\]
\end{proof}

\begin{remark}
In the proof of Theorem~\ref{main2}, one may also use the positivity of Riemann--Roch polynomials in \cite{Ji20} to estimate global sections in order to compute $\kappa^\mathbb{R}_{\sigma, \sup}(X, D)$ and $\kappa^\mathbb{R}_{\sigma, \inf}(X, D)$ directly.
\end{remark}
\bibliographystyle{acm}
\bibliography{Numerical_dimensions_Calabi_Yau}

\end{document}